%\documentclass[smallextended]{svjour3}
%
%\smartqed
\documentclass{amsart}
\usepackage{amsmath}
\usepackage{amssymb}
\usepackage{amscd}
\usepackage{amstext}
\usepackage[all]{xy}
\xyoption{2cell}
\UseTwocells
\usepackage{graphicx}
\usepackage{color}

\newcommand{\calL}{{\mathcal L}}
\newcommand{\calP}{{\mathcal P}}
\newcommand{\calI}{{\mathcal I}}
\newcommand{\calR}{{\mathcal R}}
\newcommand{\calS}{{\mathcal S}}
\newcommand{\calU}{{\mathcal U}}
\newcommand{\iso}{\cong}
\newcommand{\niso}{\not\cong}
\newcommand{\bbN}{{\mathbb N}}
\newcommand{\bbP}{{\mathbb P}}
\newcommand{\bbR}{{\mathbb R}}
\newcommand{\bbZ}{{\mathbb Z}}
\newcommand{\bdx}{\boldsymbol{x}}
\newcommand{\ds}{\oplus}
\newcommand{\Ds}{\bigoplus}
\newcommand{\DS}{\bigoplus\limits}
\newcommand{\Sm}{\sum}
\newcommand{\supp}{\operatorname{supp}}
\renewcommand{\mod}{\operatorname{mod}}
\newcommand{\rank}{\operatorname{rank}}
\newcommand{\Hom}{\operatorname{Hom}}
\newcommand{\End}{\operatorname{End}}
\newcommand{\rep}{\operatorname{rep}}
\newcommand{\rad}{\operatorname{rad}}
\newcommand{\soc}{\operatorname{soc}}
\newcommand{\Ker}{\operatorname{Ker}}
\newcommand{\Cok}{\operatorname{Coker}}
\newcommand{\im}{\operatorname{Im}}
\newcommand{\blank}{\operatorname{-}}
\newcommand{\udim}{\operatorname{\underline{dim}}\nolimits}
\newcommand{\al}{\alpha}
\newcommand{\be}{\beta}
\newcommand{\Ga}{\Gamma}
\newcommand{\la}{\lambda}
\newcommand{\La}{\Lambda}
\newcommand{\ta}{\tau}
\newcommand{\J}{\mathsf{J}}
\newcommand{\inv}{^{-1}}

\newcommand{\ya}[1]{\xrightarrow{#1}}
\newcommand{\Mat}[1]{\mathbb{M}_{#1}(\Bbbk)}

\newcommand{\bmat}[1]{\begin{bmatrix} #1 \end{bmatrix}}

\newcommand{\smat}[1]{\begin{smallmatrix} #1 \end{smallmatrix}}

\newcommand{\Ma}{ \begin{matrix}1&0\\ 0&1 \\0&0\end{matrix}}
\newcommand{\Mb}{ \begin{matrix}0&0\\1&0\\ 0&1 \end{matrix}}
\newcommand{\Maa}{\smat{M(\al)&&&\\&M(\al)&&\\&&\ddots &\\&&&M(\al)}}
\newcommand{\Mab}{\smat{-E_{d_2}&&&&\rule{10pt}{0pt}0\rule{10pt}{0pt}\\&-E_{d_2}&&&0\\&&\ddots &&\vdots\\&&&-E_{d_2}&0}}
\newcommand{\Mba}{\smat{M(\be)&&&\\&M(\be)&&\\&&\ddots &\\&&&M(\be)}}
\newcommand{\Mbb}{\smat{\rule{10pt}{0pt}0\rule{10pt}{0pt}&-E_{d_2}&&&\\0&&-E_{d_2}&&\\\vdots&&&\ddots &\\0&&&&-E_{d_2}}}

\newcommand{\rej}{\operatorname{\sf Rej}}
\newcommand{\tr}{\operatorname{\sf Tr}}
\newcommand{\bd}{\boldsymbol{d}}

\setlength{\textwidth}{155mm}
\setlength{\textheight}{219mm}
\setlength{\oddsidemargin}{3mm}
\setlength{\evensidemargin}{3mm}

%for arXiv

\newtheorem{theorem}{Theorem}[section]
\newtheorem{proposition}[theorem]{Proposition}
\newtheorem{lemma}[theorem]{Lemma}

\theoremstyle{definition}
\newtheorem{definition}[theorem]{Definition}
\newtheorem{example}[theorem]{Example}

\theoremstyle{remark}
\newtheorem{remark}[theorem]{Remark}

 \makeatletter
    
    \@addtoreset{equation}{section}
  \makeatother

%\begin{document}

\title{Decomposition theory of modules: the case of Kronecker algebra}
%\titlerunning{Decomposition theory of modules}
\author{Hideto Asashiba \and Ken Nakashima \and Michio Yoshiwaki}
%\authorrunning{Hideto Asashiba, Ken Nakashima and Michio Yoshiwaki}

\address{
\begin{flushleft}
Hideto Asashiba \\%\at
Department of Mathematics, Faculty of Science, Shizuoka University,\\
836 Ohya, Suruga-ku, Shizuoka, 422-8529, Japan.\\
%\and
\end{flushleft}
}
\email{asashiba.hideto@shizuoka.ac.jp}
\address{
\begin{flushleft}
Ken Nakashima\\ %\at
Graduate School of Science and Technology, Shizuoka University,\\
836 Ohya, Suruga-ku, Shizuoka, 422-8529, Japan.\\
Tel.: +81-54-238-4722 \\ Fax: +81-54-238-4722\\
%\and
\end{flushleft}
}
\email{gehotan@gmail.com}
\address{
\begin{flushleft}
Michio Yoshiwaki \\ %\at
Department of Mathematics, Faculty of Science, Shizuoka University,\\
836 Ohya, Suruga-ku, Shizuoka, 422-8529, Japan.\\
Osaka City University Advanced Mathematical Institute,\\
3-3-138 Sugimoto, Sumiyoshi-ku, Osaka, 558-8585, Japan.\\
\end{flushleft}
}
\email{yoshiwaki.michio@shizuoka.ac.jp}

%\date{Received: date / Accepted: date}% The correct dates will be entered by the editor
\keywords{decomposition \and Auslander-Reiten theory \and topological data analysis \and Kronecker algebra \and quiver \and algebra.}

\begin{document}
\maketitle

\begin{abstract}
Let $A$ be a finite-dimensional algebra over an algebraically closed field $\Bbbk$.  For any finite-dimensional $A$-module $M$ we give a general formula that computes the indecomposable decomposition of $M$ without decomposing it,
for which we use the knowledge of AR-quivers that are already computed in many cases.
The proof of the formula here is much simpler than that in a prior literature by Dowbor and Mr{\'o}z.
As an example we apply this formula to the Kronecker algebra $A$ and give an explicit formula to compute the indecomposable decomposition of $M$, which enables us to make a computer program.
%\keywords{decomposition \and Auslander-Reiten theory \and topological data analysis \and Kronecker algebra \and quiver \and algebra.}

\end{abstract}

\section{Introduction}
Throughout this paper $\Bbbk$ is an algebraically closed field, and
all vector spaces, algebras and linear maps are assumed to be finite-dimensional
$\Bbbk$-vector spaces, finite-dimensional $\Bbbk$-algebras and $\Bbbk$-linear maps, respectively.
Furthermore all modules over an algebra considered here are assumed to be
finite-dimensional left modules.

Let $A$ be an algebra,  $\calL$ a complete set  of
representatives of isoclasses of indecomposable $A$-modules.
Then the Krull-Schmidt theorem states the following.
For each $A$-module $M$, there exists a unique map
$\boldsymbol{d}_M \colon \calL \to \bbN_0$ such that
$$
M \iso \Ds_{L \in \calL}L^{(\boldsymbol{d}_M(L))},
$$
which is called an {\em indecomposable decomposition} of $M$.
Therefore, $M \iso N$ if and only if $\boldsymbol{d}_M = \boldsymbol{d}_N$ for all $A$-modules $M$ and $N$,
i.e., the map $\boldsymbol{d}_M$ is a complete invariant of $M$ under isomorphisms.
Note that since $M$ is finite-dimensional, the support
$\supp(\boldsymbol{d}_M):= \{L \in \calL \mid \boldsymbol{d}_M(L) \ne 0\}$ of $\boldsymbol{d}_M$ is a finite set.
We call such a theory a {\em decomposition theory}
that computes the indecomposable decomposition of a module.

Now Krull-Schmidt theorem reduces the description of the module category to
that of the full subcategory of indecomposable modules, for which
the Auslander-Reiten theory was developed since 1970s in representation theory of algebras.
Almost split sequences are the most important notion in the theory that combine
indecomposable modules by irreducible morphisms, by which unknown indecomposables
are computed from known ones.
%
%%% split into shorter sentences %%%%%%
%
In many cases it enabled us to compute the Auslander-Reiten quiver (AR-quiver for short)
of $A$ that is a combinatorial description of the category of modules over $A$,
the vertex set of which can be identified with the list $\calL$. %%% split here
It is constructed by gluing all meshes that is a visual form of almost split sequences over $A$.
Thus all information on almost split sequences over $A$ are encoded in the AR-quiver
in a visual way.
Namely, if $0 \to X \to Y \to Z \to 0$ is an almost split sequence,
and $Y = \Ds_{i=1}^n Y_i^{(a_i)}$
\ ($n \ge 1$) is an indecomposable decomposition of $Y$
with $Y_i$ pairwise non-isomorphic and $a_i \ge 1$ for all $i$, then we express it by the quiver
$$
\xymatrix{
&Y_1\\
X &\vdots& Z\\
&Y_n
\ar@/^2ex/^{a_1 \text{arrows}}"2,1";"1,2"
\ar@{}|{\ddots}"2,1";"1,2"
\ar@/_2.3ex/"2,1";"1,2"
\ar@/^2.3ex/^{a_1 \text{arrows}}"1,2";"2,3"
\ar@{}|{\rotatebox{90}{$\ddots$}}"1,2";"2,3"
\ar@/_2ex/"1,2";"2,3"
\ar@{--}"2,1";"2,3"
\ar@/^2ex/"2,1";"3,2"
\ar@{}|{\rotatebox{90}{$\ddots$}}"2,1";"3,2"
\ar@/_2.3ex/_{a_n \text{arrows}}"2,1";"3,2"
\ar@/^2.3ex/"3,2";"2,3"
\ar@{}|{\ddots}"3,2";"2,3"
\ar@/_2ex/_{a_n \text{arrows}}"3,2";"2,3"
%\ar@{..}"1,2";"3,2"
}
$$
with a broken line between $X$ and $Z$
(note here that also both $X, Z$ are indecomposable by definition of almost split sequences%,
%and if $Y_1 \iso Y_2$, then $Y_1$ and $Y_2$ are identified and
%we have a double arrow from $X$ to $Y_1$
).
The correspondence $\ta\colon Z \mapsto X$ is called the {\em AR-translation}.
For example, it has the forms
$$
\vcenter{
\xymatrix{
& Y_1\\
X && Z\\
& Y_2
\ar@{--}"2,1";"2,3"
\ar"2,1";"1,2"
\ar"2,1";"3,2"
\ar"1,2";"2,3"
\ar"3,2";"2,3"
}}
\text{ if $n = 2, a_1 = a_2 = 1$, and}
\vcenter{\xymatrix{
&Y_1\\
X && Z
\ar@{--}"2,1";"2,3"
\ar@/^/"2,1";"1,2"
\ar@/_/"2,1";"1,2"
\ar@/^/"1,2";"2,3"
\ar@/_/"1,2";"2,3"
}}
\text{if $n=1, a_1 =2$.}
$$
See \cite{ASS} for details.

The purpose of this paper is to develop a decomposition theory by using
the knowledge of AR-quivers.
Thus in the case that $\calL$ is already computed and all almost split sequences are known,
we aim to compute
\begin{enumerate}
\item[(I)]
$\boldsymbol{d}_M$
and
\item[(II)]
a finite set $S_M$ such that $\supp(\boldsymbol{d}_M) \subseteq S_M \subseteq \calL$
\end{enumerate}
for all $A$-modules $M$.
Note that (II) is needed to give a finite algorithm.
If $A$ is representation-finite (i.e., if the set $\calL$ is finite), then
the problem (II) is trivial because we can take $S_M := \calL$.

In the topological data analysis, to analyse the topological features of a point cloud $C$,
a set of points in $\bbR^u$ for some fixed positive integer $u$,
the persistent homology $M_C$ may be used that encodes topological information on $C$.
Namely, fix a sequence of positive real numbers $r_1 < r_2 < \cdots < r_n$.
For each $i$ a simplicial complex $X(r_i)$ is defined by considering balls of radius $r_i$ with center $c$ ($c \in C$).
This defines a sequence $M_C$
of $k$-th homologies:
$$
H_k(X(r_1)) \to H_k(X(r_2)) \to \cdots \to H_k(X(r_n))
$$
for a fixed dimension $k$,
which encodes the ``lifetime'' of the $k$-dimensional ``hole''.
Now the persistent homology $M_C$ is just a module over
the path algebra $\La_n = \Bbbk Q_n$ of a quiver $Q_n$ of the form
$$
1 \ya{\al_1} 2 \ya{\al_2} \cdots \ya{\al_{n-1}} n
$$
of Dynkin type $A_n$ for some positive integer $n$.
%
% shorter sentences
%
Therefore to understand the topological features of a point cloud $C$ we can use the knowledge of the
map $\bd_{M_C}$, which is nothing but the ``persistence diagram'' of $C$.
Usually the values of $\bd_{M_C}(L)$ \ ($L \in \calL$) is presented by colors on
$\calL$, and $\calL$ is expressed by a set of lattice points in a triangle.
More precisely, 
the list $\calL$ is given by $\{\mathbb{I}(b, d) \mid 1 \le b \le d \le n\}$
thanks to Gabriel's theorem on representations of Dynkin quivers, where $\mathbb{I}(b, d)$ is given by
$$
0 \ya{} \cdots \ya{} 0\ya{}\Bbbk \ya{1} \Bbbk \ya{1}\cdots \ya{1}\Bbbk \ya{}0
\ya{}\cdots \ya{}0
$$
with $\Bbbk$ starting at the vertex $b$ and stopping at $d$
(this represents the ``lifetime'' $d - b$, ``birth'' $b$ and ``death'' $d$ of a $k$-dimensional hole).
Therefore there exists a 1-1 correspondence between $\calL$ and the set
$\{(b, d) \mid 1 \le b \le d \le n\}$, which is a subset of  $\bbZ^2$ 
forming a triangle (See for instance papers \cite{ZC} and \cite{CS}).
Note that this set of vertices together with horizontal and vertical edges connecting
them can be regarded as the underlying graph of the AR-quiver of $\La_n$.
As an application of our decomposition theory we gave an explicit formula for $\La_n$-modules, i.e., for the persistent diagram (see formula \eqref{eq:An} in Example \ref{exm:An}).  
We refer the reader to \cite{O},  \cite{C}, \cite{CS}  and \cite{EH} for details on  persistent
homology and/or topological data analysis.

To analyse some properties of a set of point clouds,
 e.g., a motion of a point cloud,
persistent homologies were generalized to persistence modules $M$,
which turn out to be modules over an algebra of the form $\La_m \otimes_\Bbbk \La_n$, 
where we allow any orientation of $Q_m$ and $Q_n$.
Namely, their underlying graphs have
the form
$$
\xymatrix{1 \ar@{-}[r] & 2 \ar@{-}[r]& \cdots \ar@{-}[r] & l}
$$
of type $A_l$ for $l = m, n$.
Also in this case we need to compute the persistence diagram $\bd_M$
to investigate the set of point clouds.  It was done in \cite{EH} for the case $(m, n) = (2, 3)$.
Our argument here can be applied to have a decomposition theory for persistence modules.

\begin{example}
\label{exm:Jordan}
The decomposition theory for polynomial algebras in one variable $A=\Bbbk[x]$ is already well known.
%Finite-dimensional $A$-modules are regarded as square matrices,
%and the indecomposables are given by Jordan normal forms.
A finite-dimensional $A$-module is a pair $(V, f)$ of a finite-dimensional
$\Bbbk$-vector space  $V$ and an endomorphism $f$ of $V$,
and by fixing a basis of $V$ we may regard $V = \Bbbk^d$ for $d:= \dim V$
and $f$ as a square matrix $M$ of size $d$.  In this way we identify $(V, f)$ with $M$.
In this case we may have $\mathcal{L}= \{J_i (\lambda)\ |\ i\geq 1, \la \in \Bbbk \}$, where $J_i (\la)$ is the Jordan cell of size $i \geq 1$ with eigenvalue $\la \in \Bbbk$.
Let $\La$ be the set of all distinct eigenvalues of $M$ and set
$M_{\la}= M-\la E_d$ for $\la\in \La$.
Then the following is well known.
\begin{theorem}The problems $(${\rm I}$)$ and $(${\rm II}$)$ are solved as follows.
\text{}
\begin{description}
\item[A solution to $(${\rm I}$)$]
Let $i \in \bbN$ and $\la \in \La$.  Then
\begin{equation} \label{Jordan-formula}
\boldsymbol{d}_M (J_i (\la))=
\begin{cases}
d+\rank M_{\la}^2 -2 \rank M_{\la}&(i=1)\\
\rank M_{\la}^{i+1} + \rank M_{\la}^{i-1} -2 \rank M_{\la}^{i}  &(i\geq 2)
\end{cases}
\end{equation}
$($Note that by setting $M_\la^0$ to be the identity matrix of size $d$,
the first equality has the same form as the second.$)$
\item[A solution to $(${\rm II}$)$]
$
S_M = \{J_i (\la)\ |\ i\leq d, \la\in \La \}.
$
\end{description}
\end{theorem}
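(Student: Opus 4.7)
The plan is to reduce both claims to an elementary rank computation on Jordan blocks. By Krull--Schmidt we may write
\[
M \iso \Ds_{j \ge 1,\, \mu \in \Bbbk} J_j(\mu)^{(a_{j,\mu})}, \qquad a_{j,\mu} := \bd_M(J_j(\mu)),
\]
and the goal becomes to isolate a single coefficient $a_{i,\la}$ from the sequence $(\rank M_\la^i)_{i \ge 0}$. The key tool is a discrete second-difference operator. The whole proof is linear-algebra bookkeeping; the only point needing a little care is the boundary case $i=1$, which is absorbed cleanly by the convention $M_\la^0 = E_d$ recorded in the statement.

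First I would compute $\rank M_\la^i$ by decomposing along Jordan summands. A block $J_j(\mu)$ with $\mu \ne \la$ contributes $(J_j(\mu) - \la E_j)^i$, which is invertible, so has rank $j$ independently of $i$. A block $J_j(\la)$ contributes $J_j(0)^i$, whose rank is $\max(j - i,\,0)$. Gathering the $i$-independent terms into a constant $C_\la := \sum_{\mu \ne \la,\,j} j\, a_{j,\mu}$, this yields, for every $i \ge 0$,
\[
\rank M_\la^i \;=\; C_\la + \sum_{j \ge 1} \max(j - i,\,0)\, a_{j,\la}.
\]

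Next I would apply the second-difference operator $f \mapsto f(i-1) + f(i+1) - 2 f(i)$ to both sides. A short case check on $j > i$, $j = i$, and $j < i$ gives
\[
\max(j - i - 1,0) + \max(j - i + 1,0) - 2\max(j - i,0) = \delta_{j,i},
\]
so $C_\la$ is killed and only the single coefficient $a_{i,\la}$ survives; this is precisely the $i \ge 2$ line of \eqref{Jordan-formula}, and the $i = 1$ line is the same identity read with $\rank M_\la^0 = d$. For (II), $J_i(\la)$ has dimension $i$ and therefore cannot occur as a summand of $M$ when $i > d$; and if $\la \notin \La$, then $M_\la$ is invertible, every $\rank M_\la^k$ equals $d$, and the formula forces $a_{i,\la} = 0$. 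This gives $\supp(\bd_M) \subseteq S_M$. I foresee no serious obstacle: the classical structure theory of $\Bbbk[x]$-modules does the heavy lifting, and the only delicate step is matching the $i=1$ case to the $i\ge 2$ formula, which is precisely what the authors' convention $M_\la^0 = E_d$ is designed to handle.
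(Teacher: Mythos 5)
Your proposal is correct, and every step checks out: the blockwise rank count ($\rank$ of $(J_j(\mu)-\la E_j)^i$ equals $j$ for $\mu\ne\la$ and $\max(j-i,0)$ for $\mu=\la$), the finiteness of the support making $C_\la$ well defined, the second-difference identity $\max(j-i-1,0)+\max(j-i+1,0)-2\max(j-i,0)=\delta_{j,i}$, and the observation that the convention $M_\la^0=E_d$ makes the $i=1$ case a literal instance of the same identity. However, it is a genuinely different route from the one the paper takes. The paper does not prove \eqref{Jordan-formula} by expanding $M$ into Jordan blocks at all: it derives it (in Example \ref{exm:Jordan-PD}) as a special case of the general AR-theoretic Theorem \ref{main-formula}, using the almost split sequences \eqref{eq:ass-Jordan} over $\Bbbk[x]$ together with the single Hom-space computation $\dim \Hom_A(J_i(\la),M)=d-\rank M_\la^i$ of \eqref{eq:Hom-Jordan}; the second difference in your argument is exactly the shadow of the mesh $0\to J_i(\la)\to J_{i-1}(\la)\ds J_{i+1}(\la)\to J_i(\la)\to 0$ applied through $\Hom_A(-,M)$. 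What your elementary approach buys is self-containedness: it needs only the Jordan canonical form and no functorial machinery, and it proves (II) in passing. What it costs is generality, which is the point of the paper: your argument presupposes the explicit classification of indecomposables and their powers' ranks, and does not indicate how to proceed for an arbitrary algebra, whereas the paper's method (minimal projective resolutions of simple functors, Proposition \ref{prp:proj-res}) turns any known AR-quiver into such a formula --- this is precisely how the Kronecker case is then handled in Theorem \ref{thm:pd}. Also note that for (II) your eigenvalue criterion matches the paper's choice of $S_M$, but in the general framework the analogue of this step is nontrivial (Section 5), so it is worth being aware that the easy dimension-plus-eigenvalue argument is special to $\Bbbk[x]$.
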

\end{example}

In this paper, 
we will solve the problem (I) in the decomposition theory for any finite-dimensional algebra $A$.
This turns out to be an extension of the result for $A=\Bbbk[x]$ above.
In particular, for the Kronecker algebra $A=\Bbbk Q$ with $Q = (\xymatrix@1{1 \ar@/^/[r]^\al \ar@/_/[r]_\be &2})$, 
we will give an explicit formula for the problem (I) and a solution to the problem (II). 

After submitting the paper we are pointed out by Emerson Escolar and the referee
that there was already a similar investigation \cite{DM} by Dowbor and Mr{\'o}z in the literature, which we did not know before.  Thus this work was done independently.
We here list some relationships between their results and ours.
\begin{enumerate}
\item[(i)]
They also have the same statement as Theorem \ref{main-formula} and
 its dual version,
namely a solution to (I).
Their proof is similar to the first version of ours using a ``Cartan matrix'' of the module
category of an algebra $A$ and an AR-matrix of $A$ as its inverse,
but the proof presented here does not use them and is much simplified by using
the minimal projective resolutions of simple functors that are given by
almost split sequences and sink maps into indecomposable injective modules
(Proposition \ref{prp:proj-res}).
They also used this as a basic fact but not efficiently, namely other considerations
were superfluous for the proof.
\item[(ii)]
Our Theorem \ref{thm:pd} gives an explicit way of computation of the map $\bd_M$
for a module $M$ by using ranks of matrices constructed by the structure matrices of $M$,
while they did not give such formulas explicitly.
\item[(iii)]
Let $M := (\xymatrix@C=2em{\Bbbk^{d_1} \ar@/^/[r]^{M(\al)} \ar@/_/[r]_{M(\be)} &\Bbbk^{d_2}})$ be a representation of the Kronecker algebra,
where $M(\al), M(\be)$ are $d_2 \times d_1$ matrices.
To solve the problem (II) we first 
compute a decomposition
\begin{equation}\label{eq:decomp-M}
M = P_M \ds R'_M \ds R(\infty)_M \ds I_M
\end{equation}
 of the module $M$ into
the preprojective part $P_M$,
the preinjective part $I_M$,
the regular part $R(\infty)_M$ with parameter $\infty$, 
and the regular part $R'_M$ without parameter $\infty$
by using traces and a reject as follows:
\begin{equation}\label{eq:dec-four}
R_M\ds I_M = \rej_M(P_{d_2}), I_M = \tr_{R_M\ds I_M}(I_{d_1}), \text{ and } 
R(\infty)_M=\tr_{R_M}(R_d(\infty)),
\end{equation}
where $R_M:= R'_M \ds R(\infty)_M$ is the regular part of $M$.
By applying  Theorem \ref{thm:pd} to each of 
$P_M, R'_M ,  R(\infty)_M, \text{ and } I_M$ we obtain
the indecomposable decomposition of the module $M$.
Note that each of these modules have the smaller dimensions than that of $M$,
which reduces the complexity of the computation.

On the other hand Dowbor and Mr{\'o}z uses Proposition 4.4 of \cite{DM} stating that the regular indecomposable module
$R_n(\la)$ appears as a direct summand of $M$ if the following hold:
\begin{equation}\label{eq:DM}
\la\text{ is a common root of all }(d_2 - \sum_{i=1}^{n_P}s_i)\text{-minors of the matrix }
M(\al) - x M(\be)
\end{equation}

regarded as polynomials in $\Bbbk[x]$, where
$P_M = P_1^{s_1} \ds \cdots \ds P_{n_P}^{s_{n_P}}$
and $x$ is a variable
(see Theorem \ref{thm:Kronecker} below for the definition of
$P_i, R_n(\la)$, $i = 1, \dots, s_{n_P}$).

The main difference between ours and theirs is in \eqref{eq:dec-four} and \eqref{eq:DM}.
To check \eqref{eq:DM} we need ${d_1 \choose s} \cdot {d_2 \choose s}$ calculations of determinants, where $s = d_2 - \sum_{i=1}^{n_P}s_i$.
Their complexity seems to be much greater than that of \eqref{eq:dec-four} because
the latter uses traces and rejects, which needs time but is computed only three times.
Note also that the decomposition \eqref{eq:decomp-M} given by \eqref{eq:dec-four}
has an important meaning in its own right.
For instance Iwata--Shimizu \cite{IS} was interested in each of the preprojective part $P_M$,
the preinjective part $I_M$ and the regular part $R(\infty)_M$ with parameter $\infty$
and in that case it is needed to take out those parts separately,
which is made possible by our method.
\item[(iv)]
They investigated also the cases of general $\tilde{A}$-quivers and representation-finite
string algebras.
\end{enumerate}

\section{Preliminaries}
Let $m, n$ be non-negative integers.
Then we denote by $\Mat{m,n}$ the vector space of $m\times n$ matrices over $\Bbbk$,
and by $E_n$ the identity matrix of size $n$ (for $n \ge 1$).
By the isomorphism $\Mat{m,n} \to \Hom_\Bbbk(\Bbbk^n, \Bbbk^m)$ sending each
$M \in \Mat{m,n}$ to the linear map given by the left multiplication by $M$
we identify $\Mat{m,n}$ with $\Hom_\Bbbk(\Bbbk^n, \Bbbk^m)$, and regard each $M \in \Mat{m,n}$ as the corresponding linear map
$\Bbbk^n \to \Bbbk^m$.  If $m$ or $n$ is zero, we denote the matrices corresponding to the 
zero maps $\Bbbk^n \to \Bbbk^m$ by $\J_{m,n}$, respectively and call them {\em empty matrices}.

The Kronecker algebra $A$ is a path algebra of the quiver
$Q = (\xymatrix@1{1 \ar@/^/[r]^\al \ar@/_/[r]_\be &2})$, and the category
$\mod A$ of finite-dimensional $A$-modules is equivalent to the category $\rep Q$ of finite-dimensional representations of $Q$ over $\Bbbk$.
We usually identify these categories.
Recall that a representation $M$ of $Q$ is a diagram
$\xymatrix@1{M(1)\ar@/^/[r]^{M(\al)} \ar@/_/[r]_{M(\be)} &M(2)}$
of vector spaces and linear maps, and the {\em dimension vector} of $M$
is defined to be the pair $\udim M:= (\dim M(1), \dim M(2))$.
When $\udim M = (d_1, d_2)$,
without loss of generality we may set $M(i) = \Bbbk^{d_i}$ for $i =1, 2$ and 
$M(\al), M(\be) \in \Mat{d_2, d_1}$.
We denote $M$ by the pair of matrices $(M(\al), M(\be))$.

We here list well known facts on the Kronecker algebra
(see Ringel \cite[3.2]{R} for instance).

\begin{theorem}\label{thm:Kronecker}
For the Kronecker algebra $A$ the following statements hold.

\begin{enumerate}
 \item[$(1)$]
The list $\calL$ of indecomposables is given as follows.

\def\boldzero{\mathbf{0}}
Preprojective indecomposables:
$\calP:=\left\{\left.P_n:= \left(\bmat{E_{n-1}\\{}^t\boldzero}, \bmat{{}^t\boldzero\\E_{n-1}}\right)\right|  n \ge 1\right\}$,

Preinjective indecomposables:
$\calI:=\left\{\left.I_n:= ([E_{n-1}, \boldzero], [\boldzero, E_{n-1}])\, \right| n \ge 1\right\}$,

Regular indecomposables:
$$\calR:=\{R_n(\la):= (E_n, J_n(\la)),
R_n(\infty):= (J_n(0), E_n)\mid n \ge 1, \la \in \Bbbk\},$$
where  $\boldzero$ is the $(n -1) \times 1$
matrix with all entries 0.
Note that
$$
\udim P_n = (n-1, n), \udim I_n = (n, n-1), \udim R_n(\la) = (n,n)
$$
for all $n \in \bbN$ and $\la \in \bbP^1(\Bbbk) = \Bbbk \cup \{\infty\}$.
\item[$(2)$]
The Auslander-Reiten quiver $($AR-quiver for short$)$ of $A$ has the following form:

$$
\xymatrix@R=10pt@C=10pt{
&P_2 &&P_4&&\cdots && && && \cdots && I_3 && I_1.\\
P_1&&P_3&&\cdots && && && \cdots&&I_4  && I_2
\ar@/^/"2,1";"1,2"\ar@/_/"2,1";"1,2" \ar@{--}"2,1";"2,3"
\ar@/^/"1,2";"2,3"\ar@/_/"1,2";"2,3" \ar@{--}"1,2";"1,4"
\ar@/^/"2,3";"1,4"\ar@/_/"2,3";"1,4" \ar@{--}"2,3";"2,5"
\ar@{--}"1,4";"1,6"
\ar@/^/"1,14";"2,15"\ar@/_/"1,14";"2,15" \ar@{--}"1,14";"1,16"
\ar@/^/"2,13";"1,14"\ar@/_/"2,13";"1,14" \ar@{--}"2,13";"2,15"
\ar@/^/"2,15";"1,16"\ar@/_/"2,15";"1,16" \ar@{--}"2,3";"2,5"
\ar@{--}"1,4";"1,6"
\ar@{--}"2,13";"2,11"
\ar@{--}"1,14";"1,12"
\ar@{}"1,7";"2,10"|{\calR}
\save "1,7"."2,10"*[F]\frm{} \restore}
$$
In the above the rectangle part $\calR$ is given as the disjoint union of
a family $(\calR_\la)_{\la \in \bbP^1(\Bbbk)}$ of ``homogeneous tubes'' $\calR_\la$
that has the form $$
\xymatrix@R=20pt@C=-8pt
{
\vdots\\
R_3(\la)&\\
R_2(\la)&\\
R_1(\la)&
\ar@/^/"4,1";"3,1"\ar@/^/"3,1";"4,1"
\ar@/^/"3,1";"2,1"\ar@/^/"2,1";"3,1"
\ar@/^/"2,1";"1,1"\ar@/^/"1,1";"2,1"
\ar@(dr,ur)@{..}"4,2";"4,2"
\ar@(dr,ur)@{..}"3,2";"3,2"
\ar@(dr,ur)@{..}"2,2";"2,2"
}
$$
where dotted loops mean that for all $n \in \bbN$
the Auslander-Reiten translation $\ta$ sends $R_n(\la)$ to itself:
$\ta R_n(\la) = R_n(\la)$.
\item[$(3)$]
Let $X, Y \in \calL$.  If $\Hom_A(X, Y) \ne 0$, then $X$ is ``on the left'' of $Y$, i.e.,
one of the following occurs:
\begin{itemize}
\item[\rm (i)]
$X \iso P_m, Y \iso P_n$ with $m\le n$,
\item[\rm (ii)]
$X \in \calP, Y \in \calR \cup \calI$,
\item[\rm (iii)]
$X \iso R_m(\la), Y \iso R_n(\mu)$ with $\la = \mu$,
\item[\rm (iv)]
$X \in \calR, Y \in \calI$, or
\item[\rm (v)]
$X \iso I_m, Y \iso I_n$ with $m \ge n$.
\end{itemize}
\end{enumerate}
\end{theorem}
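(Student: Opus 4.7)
The plan is to appeal to Kronecker's classical theory of matrix pencils for part (1). A representation with $\udim M = (d_1, d_2)$ is a pair $(M(\al), M(\be)) \in \Mat{d_2, d_1}^2$, and isomorphism classes correspond to orbits under the action $(P, Q) \cdot (A, B) = (PAQ\inv, PBQ\inv)$ of $GL_{d_2}(\Bbbk) \times GL_{d_1}(\Bbbk)$. Kronecker's normal form decomposes every such orbit as a direct sum of four canonical block types, which translate into the listed families $P_n$, $I_n$, $R_n(\la)$ ($\la \in \Bbbk$) and $R_n(\infty)$. Indecomposability of each block is verified by showing that its endomorphism algebra is local; for instance an endomorphism of $R_n(\la)$ is a pair $(\varphi, \psi) \in \Mat{n}^2$ with $\varphi = \psi$ and $\varphi J_n(\la) = J_n(\la) \varphi$, yielding $\End R_n(\la) \iso \Bbbk[t]/(t^n)$.

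For part (2), once the list $\calL$ is in hand, I would compute $\ta = D \operatorname{Tr}$ on each family via minimal projective presentations. Direct computation gives $\ta\inv P_n = P_{n+2}$ with almost split sequence $0 \to P_n \to P_{n+1}^{\oplus 2} \to P_{n+2} \to 0$ (the two copies of $P_{n+1}$ reflect the two arrows $\al$ and $\be$); dually $\ta I_{n+2} = I_n$; and $\ta R_n(\la) = R_n(\la)$ for every $\la \in \bbP^1(\Bbbk)$, with sequence $0 \to R_n(\la) \to R_{n-1}(\la) \ds R_{n+1}(\la) \to R_n(\la) \to 0$ (using the convention $R_0(\la) = 0$). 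Gluing these meshes produces the AR-quiver drawn in the statement, with the regular component $\calR$ decomposing as the disjoint union of the homogeneous tubes $\calR_\la$.

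For part (3), I would combine the Euler form $\langle \udim X, \udim Y \rangle = \dim \Hom(X, Y) - \dim \operatorname{Ext}^1(X, Y)$ with explicit Hom computations. Vanishing of $\Hom$ inside $\calP$ when $m > n$, and inside $\calI$ when $m < n$, follows from the Auslander-Reiten formula together with preprojectivity (resp.\ preinjectivity) forcing the relevant $\operatorname{Ext}^1$ to vanish and reading off the sign of $\langle \udim P_m, \udim P_n \rangle$. The vanishing of $\Hom$ from $\calR \cup \calI$ to $\calP$ and from $\calI$ to $\calR$ is standard for hereditary algebras with separated preprojective and preinjective components. Orthogonality of distinct tubes $\calR_\la, \calR_\mu$ reduces by induction on $m + n$, using the AR-sequences inside each tube, to the case $\Hom(R_1(\la), R_1(\mu)) = 0$ for $\la \ne \mu$, which is immediate since such a morphism would have to conjugate two different scalars on a one-dimensional space. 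The main obstacle is really part (1), which packages Kronecker's entire classification into one list; parts (2) and (3) are essentially verifications once (1) is granted.
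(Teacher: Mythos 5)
The paper never proves this theorem: it is presented as a list of well-known facts with a pointer to Ringel \cite[3.2]{R}, and everything later in the paper only uses the statement. Your proposal therefore supplies what the paper delegates to the literature, and it follows the standard route one finds there: Kronecker's classification of matrix pencils under strict equivalence for the completeness of $\calL$ in (1) (with locality of endomorphism rings, e.g.\ $\End R_n(\la)\iso\Bbbk[t]/(t^n)$, giving indecomposability), explicit almost split sequences $0\to P_n\to P_{n+1}^{\oplus 2}\to P_{n+2}\to 0$, $0\to R_n(\la)\to R_{n-1}(\la)\ds R_{n+1}(\la)\to R_n(\la)\to 0$ and their duals for (2), and Hom-vanishing arguments for (3). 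This is consistent with the paper's intent; the only difference is that you prove rather than cite, and your identification of (1) as the real content is accurate.

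One step in (3) is wrong as stated, though reparable. You claim that $\Hom_A(P_m,P_n)=0$ for $m>n$ follows because ``preprojectivity forces the relevant $\operatorname{Ext}^1$ to vanish'' and the Euler form $\langle\udim P_m,\udim P_n\rangle=n-m+1$ is nonpositive. But $\operatorname{Ext}^1_A(P_m,P_n)$ does \emph{not} vanish for $m\ge n+2$: since $A$ is hereditary, the Auslander--Reiten formula gives $\operatorname{Ext}^1_A(P_m,P_n)\iso D\Hom_A(P_n,\ta P_m)=D\Hom_A(P_n,P_{m-2})\ne 0$. So a nonpositive Euler characteristic only yields $\dim\Hom\le\dim\operatorname{Ext}^1$, not $\Hom=0$. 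To close this, either invoke directedness of the preprojective component (any nonzero non-isomorphism between indecomposables there is a sum of compositions of irreducible maps, and all arrows point from $P_i$ to $P_{i+1}$, so there is no path $P_m\to P_n$ for $m>n$), or run an induction along the meshes proving the exact values $\dim\Hom_A(P_m,P_n)=\max(0,\,n-m+1)$ and $\dim\operatorname{Ext}^1_A(P_m,P_n)=\max(0,\,m-n-1)$, and dually for $\calI$. The rest of (3) (separation of $\calP$, $\calR$, $\calI$ and orthogonality of distinct tubes via reduction to $\Hom_A(R_1(\la),R_1(\mu))=0$) is fine.
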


\begin{remark}\label{rmk:order}
(1) Let $m, n \in \bbZ$ with $m \le n$.
Then we note that there exists a monomorphism $P_m \to P_n$ and
an epimorphism $I_n \to I_m$.

(2) Now for $(a_1,a_2), (b_1,b_2) \in \bbZ^2$ we define $(a_1,a_2) \le (b_1,b_2)$ if and only if $a_i \le b_i$ 
for $i= 1 \text{ and } 2$.
Then if there exists a monomorphism $T \to U$ (or an epimorphism $U \to T$) in $\mod A$,
we have $\udim T \le \udim U$.
\end{remark}

\section{Simple functors: a solution to the problem (I) in general}

In this section we give a solution to the problem (I) by using Auslander-Reiten theory for an arbitrary algebra $A$.

\begin{definition}
For an indecomposable $A$-module $L$ we set
$$
\calS_L := \Hom_A (L,\blank)/\rad \Hom_A(L,\blank) : \mod A \to \mod \Bbbk.
$$
It is well-known that $\calS_L$ is a simple functor.
\end{definition}

\begin{lemma}\label{simple-dim}
Let $M$ be an $A$-module.
Then for any indecomposable $A$-module $L$ we have
$$
\boldsymbol{d}_M(L)=\dim \calS_L (M).
$$
\end{lemma}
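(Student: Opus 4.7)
The plan is to reduce the computation of $\dim \calS_L(M)$ to the Krull--Schmidt decomposition of $M$ by establishing two facts: that $\calS_L$ is additive on finite direct sums and that its value on an indecomposable $L'$ is $\Bbbk$ when $L' \iso L$ and $0$ otherwise. Once these two facts are in place, writing $M \iso \DS_{L' \in \calL} (L')^{(\boldsymbol{d}_M(L'))}$ immediately gives $\dim \calS_L(M) = \boldsymbol{d}_M(L) \cdot \dim \Bbbk = \boldsymbol{d}_M(L)$.

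For the first fact I would verify that $\rad \Hom_A(L, -)$ is additive, since $\Hom_A(L, -)$ clearly is. Concretely, for $f = (f_1, f_2) \in \Hom_A(L, N_1 \ds N_2)$ and any $g = (g_1, g_2) \colon N_1 \ds N_2 \to L$ one has $gf = g_1 f_1 + g_2 f_2$; testing against $g = (g_1, 0)$ and $g = (0, g_2)$ and using that $\rad \End_A(L)$ is a (two-sided) ideal of the local ring $\End_A(L)$, one obtains
\[
\rad \Hom_A(L, N_1 \ds N_2) = \rad \Hom_A(L, N_1) \ds \rad \Hom_A(L, N_2),
\]
and hence $\calS_L(N_1 \ds N_2) \iso \calS_L(N_1) \ds \calS_L(N_2)$.

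For the second fact I would use that since $L'$ is indecomposable and $\Bbbk$ is algebraically closed, $\End_A(L')$ is a local $\Bbbk$-algebra with residue field $\Bbbk$. If $L' \iso L$, then $\Hom_A(L, L') \iso \End_A(L)$ and $\rad \Hom_A(L, L') \iso \rad \End_A(L)$, so $\calS_L(L') \iso \End_A(L)/\rad \End_A(L) \iso \Bbbk$. If $L' \niso L$, then any $f \colon L \to L'$ fails to be an isomorphism, so for every $g \colon L' \to L$ the composite $gf \in \End_A(L)$ is a non-unit and lies in $\rad \End_A(L)$; thus $\Hom_A(L, L') = \rad \Hom_A(L, L')$ and $\calS_L(L') = 0$.

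Neither step is particularly subtle; the only point that requires genuine care is the additivity of the radical (and implicitly the fact that the categorical radical behaves well on morphisms into a decomposable target), so that is where I would spend the most attention when writing out the argument rigorously.
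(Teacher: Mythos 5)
Your proposal is correct and follows essentially the same route as the paper: evaluate $\calS_L$ on indecomposables (getting $\Bbbk$ for $X \iso L$ and $0$ otherwise, via locality of $\End_A(L)$ and algebraic closedness of $\Bbbk$) and then apply this to the Krull--Schmidt decomposition of $M$. The only difference is that you explicitly verify the additivity of $\rad\Hom_A(L,\blank)$, a standard point the paper leaves implicit, which is a reasonable bit of extra care rather than a different argument.
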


\begin{proof}
%By definition of $\calS_L$ we have
Since $L$ is indecomposable, $\End_A(L)$ is a local algebra.
Therefore
$\calS_L(L) = \End_A(L)/\rad(\End_A(L))$ is a finite-dimensional skew field over
the algebraically closed field $\Bbbk$, and hence $\calS_L(L) \iso \Bbbk$.
If $X \not\iso L$, then $\End_A(L)=\rad(\End_A(L))$, and $\calS_L(X) = 0$.
Thus
$$
\calS_L (X) \iso
\left\{
\begin{array}{ll}
\Bbbk &  \text{if}\ X \iso L \\
0 & \text{if}\ X \niso L
\end{array}
\right. 
$$
for all indecomposable $A$-modules $X$.
Therefore, the indecomposable decomposition
$$
M \iso \Ds_{L \in \calL}L^{(\boldsymbol{d}_M(L))}
$$
of $M$ gives us
$$
\calS_L (M) \iso \Bbbk^{(\boldsymbol{d}_M(L))},
$$
which shows the assertion.
\end{proof}

Recall the following fundamental statement in the Auslander-Reiten theory
(see Auslander-Reiten \cite{AR} or Assem-Simson-Skowro\'nski \cite[IV, 6.11.]{ASS}):

\begin{proposition}\label{prp:proj-res}
Let $L$ be an indecomposable $A$-module.
When $L$ is non-injective, let
$
0 \to L \ya{\ f\ } \DS_{X\in J_L} X^{(a(X))} \ya{\ g\ } \tau^{-1}L \to 0
$
be an almost split sequence starting at $L$ with $J_L \subseteq \calL$ and $a(X) \ge 1$
$(X \in J_L)$.
When $L$ is injective, let $f\colon L \to L/\soc L =  \DS_{X\in J_L} X^{(a(X))}$
be the canonical epimorphism $($note that $J_L = \emptyset$ if $L$ is simple injective$)$.
Then the simple functor $\calS_L$ has a minimal projective resolution
$$
0 \to \Hom_A(\tau^{-1}L,\blank) \ya{\Hom_A(g,\blank)}
\DS_{X\in J_L}\Hom_A( X, \blank)^{(a(X))} \ya{\Hom_A(f,\blank)}
\Hom_A(L,\blank) \ya{\text{can}} \calS_L \to 0,
$$
where $g=0$ and $\ta\inv L = 0$ if $L$ is injective.
\end{proposition}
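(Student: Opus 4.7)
The plan is to apply the Yoneda functor $\Hom_A(-, N)$ to the almost split sequence (or to the canonical epimorphism in the injective case), verify exactness at each of the four non-trivial positions, and then argue minimality by checking that both differentials are radical morphisms in the functor category $\mod A \to \mod \Bbbk$. Since each $\Hom_A(X, -)$ is projective by the Yoneda lemma, this will produce the asserted minimal projective resolution of $\calS_L$.

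\emph{Exactness (non-injective case).} Applying $\Hom_A(-, N)$ to $0 \to L \ya{f} E \ya{g} \ta\inv L \to 0$, where $E := \DS_{X \in J_L} X^{(a(X))}$, gives exactness at $\Hom_A(\ta\inv L, -)$ and at $\Hom_A(E, -)$ by left-exactness of $\Hom_A(-, N)$ applied pointwise. The remaining, and main, step is to prove
$$\im \Hom_A(f, N) = \rad \Hom_A(L, N)$$
for every $N$; together with the definition of $\calS_L$ this yields exactness at $\Hom_A(L, -)$ and at $\calS_L$ in one stroke. The inclusion ``$\subseteq$'' is immediate because $f$, as the left-hand map of an almost split sequence, is a radical morphism. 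For ``$\supseteq$'', I would decompose $N$ into indecomposables and observe that an element $h \in \Hom_A(L, N)$ fails to lie in $\rad \Hom_A(L, N)$ iff some component $L \to N_i$ with $N_i \iso L$ is an isomorphism, i.e., iff $h$ is a split monomorphism; hence every element of the radical fails to be a split monomorphism and therefore factors through $f$ by the left-almost-split property of $f$.

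\emph{Minimality.} In a Krull--Schmidt functor category a projective resolution is minimal iff every differential is a radical morphism. For $\Hom_A(f, -)$ this is immediate from the identification above. For $\Hom_A(g, -)$, note that $g$ itself is a radical morphism: each component $X \to \ta\inv L$ of the right minimal almost split map $g$ is not a split epimorphism since $\ta\inv L$ is indecomposable, so lies in $\rad \Hom_A(X, \ta\inv L)$, and the Yoneda embedding carries radical morphisms to radical morphisms.

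\emph{Injective case.} The identical argument applies with $f \colon L \to L/\soc L$ replacing the left half of the almost split sequence and with $\ta\inv L = 0$, $g = 0$. The only point requiring a separate check is the left-almost-split property of $f$: for injective indecomposable $L$ the socle $\soc L$ is simple and essential, so any non-split-mono $h \colon L \to N$ must have $\soc L \subseteq \ker h$ (otherwise $h|_{\soc L}$ is a monomorphism, whence $h$ is a monomorphism by essentiality and hence split by injectivity of $L$), and thus factors through the canonical projection $L \to L/\soc L$. The anticipated main obstacle throughout is the bookkeeping in the identification $\im \Hom_A(f, N) = \rad \Hom_A(L, N)$, which hinges on the dictionary between ``lying in the radical of $\Hom_A(L, N)$'' and ``failing to be a split monomorphism''; once that dictionary is in place and combined with the almost-split factorization property, everything else is formal.
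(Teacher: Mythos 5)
Your proof is correct, and it is essentially the standard functorial argument: the paper does not prove Proposition \ref{prp:proj-res} itself but recalls it from Auslander--Reiten \cite{AR} and \cite[IV.6.11]{ASS}, and your key identification $\im\Hom_A(f,N)=\rad\Hom_A(L,N)$ via the left-almost-split property of $f$ (resp.\ of $L\to L/\soc L$ in the injective case), together with the radical criterion for minimality, is exactly the argument given there. One small repair in the minimality step: a component $X\to\ta\inv L$ of $g$ is not radical ``because $\ta\inv L$ is indecomposable''; the correct reason is that if some component were an isomorphism (equivalently, a split epimorphism between the indecomposables $X$ and $\ta\inv L$), then composing a section with the inclusion of that summand would split the almost split sequence --- alternatively, the components of $g$ are irreducible, hence non-isomorphisms between indecomposables, hence lie in the radical.
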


Proposition \ref{prp:proj-res} together with Lemma \ref{simple-dim} readily gives us
the following.

\begin{theorem}\label{main-formula}
Let $M$ be an $A$-module.  Then for any indecomposable $A$-module $L$ we have
$$
\boldsymbol{d}_M(L)= \dim \Hom_A (L, M) - \Sm_{X\in J_L} a(X) \dim\Hom_A( X, M)+ \dim \Hom_A (\tau^{-1}L , M).
$$
\end{theorem}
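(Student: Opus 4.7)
The plan is to combine Proposition \ref{prp:proj-res} with Lemma \ref{simple-dim} via the observation that evaluation at the fixed module $M$ is an exact functor on the category of $\Bbbk$-linear functors $\mod A \to \mod \Bbbk$. Applying this evaluation to the minimal projective resolution
\begin{equation*}
0 \to \Hom_A(\tau^{-1}L,\blank) \to \DS_{X\in J_L}\Hom_A(X,\blank)^{(a(X))} \to \Hom_A(L,\blank) \to \calS_L \to 0
\end{equation*}
produced by Proposition \ref{prp:proj-res}, and using the Yoneda identification $\Hom_A(N,\blank)(M) = \Hom_A(N,M)$, should yield a four-term exact sequence of finite-dimensional $\Bbbk$-vector spaces
\begin{equation*}
0 \to \Hom_A(\tau^{-1}L, M) \to \DS_{X\in J_L}\Hom_A(X, M)^{(a(X))} \to \Hom_A(L, M) \to \calS_L(M) \to 0.
\end{equation*}

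Next I would take the alternating sum of dimensions in this sequence and solve for $\dim \calS_L(M)$, obtaining
\begin{equation*}
\dim \calS_L(M) = \dim \Hom_A(L, M) - \Sm_{X \in J_L} a(X)\dim \Hom_A(X, M) + \dim \Hom_A(\tau^{-1}L, M).
\end{equation*}
Invoking Lemma \ref{simple-dim} to rewrite the left-hand side as $\boldsymbol{d}_M(L)$ then gives the claimed identity.

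Finally I would sanity-check the boundary cases dictated by the conventions of Proposition \ref{prp:proj-res}. When $L$ is injective one sets $\tau^{-1}L = 0$ and $g = 0$, so $\Hom_A(\tau^{-1}L, M) = 0$ and that term in the formula simply vanishes; if moreover $L$ is simple injective, then $J_L = \emptyset$ so the middle sum is empty and the formula reduces to $\boldsymbol{d}_M(L) = \dim \Hom_A(L, M)$, consistent with $\calS_L \iso \Hom_A(L,\blank)$ in that case. There is essentially no substantive obstacle: the only point requiring a moment's verification is that evaluation at $M$ is exact on the functor category, which is immediate because kernels and cokernels of natural transformations are computed pointwise. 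Thus the entire argument amounts to transporting the exactness of the projective resolution through the evaluation functor and reading off the Euler characteristic.
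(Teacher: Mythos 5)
Your proposal is correct and is essentially the paper's own argument: the paper deduces the formula directly from Proposition \ref{prp:proj-res} and Lemma \ref{simple-dim}, i.e.\ by evaluating the minimal projective resolution of $\calS_L$ at $M$ (exactness being pointwise) and taking the alternating sum of dimensions. Your verification of the injective and simple injective boundary conventions is a sensible, if routine, addition.
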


\begin{remark} \label{calc-Homdim}
When an algebra $A$ is of the form $\Bbbk Q / I$ for some quiver $Q$
and some ideal $I$ of  $\Bbbk Q$, 
it is possible to compute $\dim \Hom_A (H,M)$ for every $H,M\in \mod A$ by using
the $\rank$ of a suitable matrix as follows,
and thus $\boldsymbol{d}_M(L)$ in Theorem \ref{main-formula} is computable. 
First regard $A$-modules $H$ and $M$ as representations $(H(i),H(\al))_{i\in Q_0, \al\in Q_1}$ and $(M(i),M(\al))_{i\in Q_0, \al\in Q_1}$ of $Q$, respectively.
Then by definition we have
\begin{equation}\label{eq:Hom}
\begin{split}
\Hom_A (H,M)= \{ (f_i)_{i\in Q_0} \in \prod_{i\in Q_0} \Hom_{\Bbbk} (H(i),M(i))\ \mid
M(\al) f_i = f_j H(\al) , 
\forall \al :i \to j\ \text{in}\ Q_1
\}.
\end{split}
\end{equation}
Therefore
$$
\Hom_A (H,M) \iso \{  \bdx \in \Bbbk ^ N \ |\ B \bdx =0 \},
$$
where $N:= \Sm_{i \in Q_0} \dim H(i) \dim M(i)$ and $B$ is
a
$c\times N$-matrix given as the coefficient matrix of the homogeneous system of linear equations $M(\al) f_i -  f_j H(\al) = 0$ for $f_i$, where
$c:= \displaystyle\sum_{\al: i \to j \text{ in } Q_1}\!\!\!\!\dim M(j) \dim H(i)$. 
Hence we obtain the equality:
$$
\dim \Hom_A (H,M) = N - \rank B.
$$
\end{remark}

\begin{example}\label{exm:Jordan-PD}
Let $A:= \Bbbk[x]$ be the polynomial algebra in one variable.
Although it is an infinite-dimensional algebra, the category
$\mod A$ of finite-dimensional $A$-modules is well understood
because $\Bbbk[x]$ is a principal ideal domain, and we can apply Auslander-Reiten theory
to $\mod A$.
It is easy to give all almost split sequences over $\Bbbk[x]$.
Namely, they are given as follows:
\begin{equation} \label{eq:ass-Jordan}
\begin{gathered}
0 \to J_1(\la) \to J_2(\la) \to J_1(\la) \to 0,\\
0 \to J_i(\la) \to J_{i-1}(\la) \ds J_{i+1}(\la) \to J_i(\la) \to 0
\end{gathered}
\end{equation}
for all $i \ge 2$ and $\la \in \Bbbk$.
This is verified by the similar argument used in the Nakayama algebra case
(cf.\ \cite[4.1 Theorem]{ASS}).
The reader may notice a similarity between \eqref{Jordan-formula} and \eqref{eq:ass-Jordan},
which will become clear now.
Let $M=(\Bbbk^d, M)$ be an $A$-module.
Then we have
\begin{equation}\label{eq:Hom-Jordan}
\dim \Hom_A (J_{i}(\la), M)=d-\rank M^i_{\la},
\end{equation}
which together with Theorem \ref{main-formula} and the formula \eqref{eq:ass-Jordan} yields the formula \eqref{Jordan-formula}. 

Indeed, let $X \in \Mat{d,i}$, and
put $X_j$ to be the $j$-th column of $X$ ($j=1,\dots, i$).
Then by \eqref{eq:Hom} $X \in \Hom_A (J_{i}(\la), M)$ iff
$MX=XJ_i(\la)=X(\la E_i+ J_i(0))=\la X+XJ_i(0)$
iff $M_\la X=XJ_i(0)$
iff
$M_\la (X_1,\dots, X_i) = (0,X_1,\dots, X_{i-1})$ iff $M_\la$ maps $X_j$'s as follows 
$$
X_i \mapsto X_{i-1}\mapsto \cdots \mapsto X_1\mapsto 0.
$$
Hence the correspondence $X \mapsto X_i$ yields the isomorphism
(the inverse is given by the correspondence $v \mapsto [M_\la^{i-1}v,\dots, M_\la v, v]$)
$$
\Hom_A (J_{i}(\la), M)\iso \{v \in\Bbbk^d \mid M_\la^i v = 0\}=\Ker M_\la^i,
$$
which shows the equality \eqref{eq:Hom-Jordan}.
\end{example}

\begin{example}\label{exm:An}
Fix a positive integer $n$ and set $A:=\La_n =\Bbbk Q_n$, where $Q_n$ is a Dynkin quiver $1 \ya{\al_1} 2 \ya{\al_2} \cdots \ya{\al_{n-1}} n$
 of type $A_n$. Decomposition Theory for modules over this algebra has important applications in the topological data analysis (See the introduction). 
Let $M:=(M_i)_{i=1}^{n}:=
(\Bbbk^{a_1} \ya{M_1}\Bbbk^{a_2} \ya{M_2}\cdots \ya{M_{n-1}}\Bbbk^{a_n}$) be a representation of $Q$ (i.e. an $A$-module).
Then the morphism space $\Hom_A(\mathbb{I}(b,d),M)$ is the set of sequences $(f_i:\mathbb{I}(b,d)(i)\ya{}\Bbbk^{a_i})_{i=1}^{n}$ 
that make the following diagram commutative:
$$
\xymatrix@R=20pt@C=16pt{
0 & \cdots & 0 & \Bbbk & \cdots & \Bbbk & 0 & \cdots & 0\\
\Bbbk^{a_1} & \cdots & \Bbbk^{a_{b-1}} & \Bbbk^{a_{b}} & \cdots & \Bbbk^{a_{d}} 
& \Bbbk^{a_{d+1}} & \cdots & \Bbbk^{a_n},
\ar^{0}"1,1";"1,2" \ar^{0}"1,2";"1,3" \ar^{0}"1,3";"1,4" \ar^{1}"1,4";"1,5"
\ar^{1}"1,5";"1,6" \ar^{0}"1,6";"1,7" \ar^{0}"1,7";"1,8" \ar^{0}"1,8";"1,9"
\ar_{0}"1,1";"2,1" \ar_{0}"1,3";"2,3" \ar_{f_b}"1,4";"2,4"
\ar_{f_d}"1,6";"2,6" \ar_{0}"1,7";"2,7" \ar_{0}"1,9";"2,9"
\ar_{M_1}"2,1";"2,2" \ar_{M_{b-2}}"2,2";"2,3" \ar_{M_{b-1}}"2,3";"2,4" \ar_{M_b}"2,4";"2,5"
\ar_{M_{d-1}}"2,5";"2,6" \ar_{M_d}"2,6";"2,7" \ar_{M_{d+1}}"2,7";"2,8" \ar_{M_{n-1}}"2,8";"2,9"
\ar@{}|<<<<<<{\circlearrowright}"1,1";"2,2"
\ar@{}|<<<<<<{\circlearrowright}"1,2";"2,3"
\ar@{}|<<<<<<<{\circlearrowright}"1,3";"2,4"
\ar@{}|<<<<<<{\circlearrowright}"1,4";"2,5"
\ar@{}|<<<<<<{\circlearrowright}"1,5";"2,6"
\ar@{}|<<<<<<<{\circlearrowright}"1,6";"2,7"
\ar@{}|<<<<<<<{\circlearrowright}"1,7";"2,8"
\ar@{}|<<<<<<{\circlearrowright}"1,8";"2,9"
}
$$
where $
\mathbb{I}(b,d)(i):=
\begin{cases}
\Bbbk & (b\leq i \leq d)\\
0 & (\text{otherwise}) 
\end{cases}$ as in the introduction. In particular, if $d=n$ (namely $\mathbb{I}(b,d)$ is projective), then
$$
\Hom_A(\mathbb{I}(b,n),M)
\iso \{(f_i)_{i=b}^{n}\mid M_{b}f_{b}=f_{b+1},\dots,M_{n-1}f_{n-1}=f_{n} \}
\iso \Bbbk^{a_b},
$$
and if $d\leq n-1$, then
$$
\begin{aligned}
\Hom_A(\mathbb{I}(b,d),M)
&\iso \{(f_i)_{i=b}^{d}\mid M_{b}f_{b}=f_{b+1},\dots,M_{d-1}f_{d-1}=f_{d},M_{d}f_{d}=0 \}\\
&\iso \{ f_b \in \Bbbk^{a_b} \mid M_{d}M_{d-1}\cdots M_b f_{b}=0 \}.
\end{aligned}
$$
Hence we obtain 
\begin{equation}\label{eq:An-Hom}
\dim \Hom_A (\mathbb{I}(b,d),M)=
a_b-\rank (M_{d}M_{d-1}\cdots M_b),
\end{equation}
where we set $M_n:=0$.
Since the AR-quiver $\Ga_A$ of $A$ is of the following form: 
$$
{\fontsize{5pt}{5pt}\selectfont
\xymatrix@R=8mm@C=-12mm{
  &  &  &  &  &  &  & \mathbb{I}(1,n) &  &  &  &  &  &  &  \\
  &  &  &  &  &  & \mathbb{I}(2,n) &  & \mathbb{I}(1,n-1) &  &  &  &  &  &  \\
  &  &  &  &  & \mathbb{I}(3,n) &  & \mathbb{I}(2,n-1) &  & \mathbb{I}(1,n-2) &  &  &  &  &  \\
  &  &  &  &  &  &  &  &  &  &  &  &  &  &  \\
  &  &  &  &  & \hspace{10mm} &  & \hspace{10mm} &  & \hspace{10mm} &  &  &  &  &  \\
  &  & \mathbb{I}(n-2,n) &  & \hspace{10mm} &  &  &  &  &  & \hspace{10mm} &  & \mathbb{I}(1,3) &  &  \\
  & \mathbb{I}(n-1,n) &  & \mathbb{I}(n-2,n-1) &  & \hspace{10mm} &  &  &  & \hspace{10mm} &  & \mathbb{I}(2,3) &  & \mathbb{I}(1,2) &  \\
\mathbb{I}(n,n) &  & \mathbb{I}(n-1,n-1) &  & \mathbb{I}(n-2,n-2) &  &  &  &  &  & \mathbb{I}(3,3) &  & \mathbb{I}(2,2) &  & \mathbb{I}(1,1), \\
%\xymatrix@R=8mm@C=-12mm{
%  &  &  &  &  &  &  & M(1,n) &  &  &  &  &  &  &  \\
%  &  &  &  &  &  & M(2,n) &  & M(1,n-1) &  &  &  &  &  &  \\
%  &  &  &  &  & M(3,n) &  & M(2,n-1) &  & M(1,n-2) &  &  &  &  &  \\
%  &  &  &  &  &  &  &  &  &  &  &  &  &  &  \\
%  &  &  &  &  & \hspace{10mm} &  & \hspace{10mm} &  & \hspace{10mm} &  &  &  &  &  \\
%  &  & M(n-2,n) &  & \hspace{10mm} &  &  &  &  &  & \hspace{10mm} &  & M(1,3) &  &  \\
%  & M(n-1,n) &  & M(n-2,n-1) &  & \hspace{10mm} &  &  &  & \hspace{10mm} &  & M(2,3) &  & M(1,2) &  \\
%M(n,n) &  & M(n-1,n-1) &  & M(n-2,n-2) &  &  &  &  &  & M(3,3) &  & M(2,2) &  & M(1,1), \\
\hspace{20mm} & \hspace{20mm} & \hspace{20mm} & \hspace{20mm} & \hspace{20mm} & \hspace{20mm} & \hspace{20mm} 
& \hspace{20mm} & \hspace{20mm} & \hspace{20mm} & \hspace{20mm} & \hspace{20mm}  & \hspace{20mm} & \hspace{20mm} & \hspace{20mm} %this line is for fixing width of the AR-quiver
\ar"8,1";"7,2" \ar"7,2";"6,3" \ar"6,3";"5,4" \ar@{--}"5,4";"4,5" \ar"4,5";"3,6" \ar"3,6";"2,7" \ar"2,7";"1,8" 
\ar"1,8";"2,9" \ar"2,9";"3,10" \ar"3,10";"4,11" \ar@{--}"4,11";"5,12" \ar"5,12";"6,13" \ar"6,13";"7,14" \ar"7,14";"8,15" 
\ar"7,2";"8,3" \ar"8,3";"7,4" \ar"6,3";"7,4" \ar"7,4";"8,5" 
\ar"2,7";"3,8" \ar"3,8";"2,9" 
\ar"8,11";"7,12" \ar"7,12";"6,13" \ar"7,12";"8,13" \ar"8,13";"7,14" 
\ar"7,4";"6,5" \ar@{--}"6,5";"5,6" \ar"8,5";"7,6" \ar@{--}"7,6";"6,7"
\ar"6,11";"7,12" \ar@{--}"5,10";"6,11" \ar"7,10";"8,11" \ar@{--}"6,9";"7,10"
\ar"3,6";"4,7" \ar"4,7";"3,8" \ar"3,8";"4,9" \ar"4,9";"3,10"
\ar@{--}"5,6";"4,7" \ar@{--}"4,7";"5,8" \ar@{--}"5,8";"4,9" \ar@{--}"4,9";"5,10"
\ar@{.}"2,7";"2,9"
\ar@{.}"3,6";"3,8" \ar@{.}"3,8";"3,10"
\ar@{.}"6,3";"6,5" \ar@{.}"6,11";"6,13"
\ar@{.}"7,2";"7,4" \ar@{.}"7,4";"7,6" \ar@{.}"7,10";"7,12" \ar@{.}"7,12";"7,14"
\ar@{.}"8,1";"8,3" \ar@{.}"8,3";"8,5" \ar@{.}"8,5";"8,7" \ar@{--}"8,7";"8,9" \ar@{.}"8,9";"8,11" \ar@{.}"8,11";"8,13" \ar@{.}"8,13";"8,15"
}}
$$
\\[-10mm]
the formula \eqref{eq:An-Hom} and Theorem \ref{main-formula} give us the formula
\begin{equation}\label{eq:An}
\bd_M(\mathbb{I}(b,d))=
\begin{cases}
\begin{aligned}
\rank (M_d\cdots M_{b-1}) +& \rank(M_{d-1}\cdots M_{b})\\
-&\{\rank(M_{d-1}\cdots M_{b-1}) + \rank(M_d\cdots M_b)\}
\end{aligned}
 &  (b < d)\\
\rank(M_d M_{d-1}) + a_b - \{\rank(M_{d-1}) + \rank(M_d)\} &  (b = d),
\end{cases}
\end{equation}
where we set $M_0:=0$ and $M_n:=0$.
Therefore if we set
$$
R(b,d):=
\begin{cases}
\rank(M_d\cdots M_b)-\rank(M_{d-1}\cdots M_b) & (b<d) \\
\rank(M_b)-a_b & (b=d),
\end{cases}
$$
then we have
$$
\boldsymbol{d}_M(\mathbb{I}(b,d))=R(b-1,d)-R(b,d)
$$
for each $(b,d) \in \{(i,j)\in \bbZ^2 \mid 1\leq i \leq j \leq n\}$.
\end{example}
\section{Solution to the problem (I) for the Kronecker algebra}

Throughout the rest of this paper $A$ is the Kronecker algebra.
To apply Theorem \ref{main-formula}
we compute the dimensions of
the spaces $\Hom_A(L, M)$ for all $L \in \calL$ and $M \in \mod A$
following Remark \ref{calc-Homdim}.

\begin{definition}\label{dfn:rank-mat}
Let $M$ be an $A$-module.
We first define the following matrices with $n \ge 1$, $\la \in \Bbbk$
(note that $P_1(M) = \J_{0,1}$ is an empty matrix).

$$
P_n(M):=
\left.\left[ 
\rule{0pt}{45pt}
\smash{
\overbrace{
    \begin{array}{cccccc}
           M(\be)&M(\al)&0&0&\cdots &0\\
           0&M(\be)&M(\al)&0&\ddots &\vdots\\
           0&0&M (\be)&M(\al)  &\ddots & 0\\
           \vdots&\vdots&\ddots&\ddots& \ddots&0 \\
           0&0&\cdots&0 &M(\be) & M(\al)
    \end{array}
    }^{\text{$n$ blocks}}
    }
\right]\right\}
\,\text{\scriptsize $n-1$ blocks},
$$
\\
$$
I_n(M):=
\left.\left[ 
\rule{0pt}{55pt}
\smash{\overbrace{
    \begin{array}{cccccc}
           M(\be)&0&0&\cdots &0\\
           M(\al)&M(\be)&0&\ddots &\vdots\\
           0&M(\al)&M (\be)&\ddots &0\\
           0&0&M(\al)&\ddots &0 \\
           \vdots&\vdots&\ddots&\ddots& M(\be)\\
           0&0&\cdots&0 &M(\al) \\
    \end{array}
    }^{\text{$n$ blocks}}}
\right]\right\}
\,\text{\scriptsize $n+1$ blocks},
$$
\\
$$
R_n(\la,M):=
\left.\left[ 
\rule{0pt}{45pt}
\smash{\overbrace{
    \begin{array}{cccccc}
           M_\la (\al,\be)&0&0&\cdots &0\\
           M(\al)& M_\la (\al,\be)&0&\ddots &\vdots\\
           0&M(\al)& M_\la (\al,\be)&\ddots &0\\
           \vdots&\ddots&\ddots&\ddots &0 \\
           0&\cdots&0&M(\al)&  M_\la (\al,\be)\\
    \end{array}
    }^{\text{$n$ blocks}}}
\right]\right\}
\,\text{\scriptsize $n$ blocks}, and
$$
\\
$$
R_n(\infty,M):=
\left.\left[ 
\rule{0pt}{45pt}
\smash{\overbrace{
    \begin{array}{cccccc}
           M(\al)&0&0&\cdots &0\\
           -M(\be)& M(\al)&0&\ddots &\vdots\\
           0&-M(\be)& M(\al)&\ddots &0\\
           \vdots&\ddots&\ddots&\ddots &0 \\
           0&\cdots&0&-M(\be)&  M(\al)\\
    \end{array}
    }^{\text{$n$ blocks}}}
\right]\right\}
\,\text{\scriptsize $n$ blocks},
$$
where we put $M_\la (\al,\be):=\la M(\al) -M(\be)$, and we define
the following numbers.
$$
\begin{aligned}
p_1(M)&:= 0, p_n(M):= \rank P_n (M)\ (n\ge 2), \\
i_0(M)&:= 0, i_n(M):= \rank I_n (M)\ (n\ge 1), \\
r_n(\la,M)&:= \rank R_n (\la,M)\ (n\ge 1, \la \in \bbP^1(\Bbbk)).
\end{aligned}
$$
\end{definition}

Using the data above we can compute the dimensions of Hom spaces
$\Hom_A(L, M)$ with $L$ indecomposable as follows.

\begin{proposition}\label{prp:hom-dim}
Let $M$ be an $A$-module.
Then we have the following formulas:
$$
\begin{aligned}
\dim \Hom_A (P_n,M)&=
\begin{cases}
(n-1)d_1-p_{n-1}(M) & (n\geq2)  \\
d_2 & (n=1)\\
\end{cases}
\\
\dim \Hom_A (I_n,M)&=nd_1-i_n(M) \quad (n\geq1)  \\
\dim \Hom_A (R_n(\la),M)&=nd_1-r_n(\la,M) \quad (n\geq1,\la\in \bbP^1(\Bbbk))
\end{aligned}
$$
\end{proposition}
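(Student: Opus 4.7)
The plan is to apply the recipe from Remark \ref{calc-Homdim} separately to each of the four indecomposable types $L = P_n, I_n, R_n(\la), R_n(\infty)$ listed in Theorem \ref{thm:Kronecker}(1). In every case I would parametrise a morphism $(f_1,f_2)\colon L\to M$ by the columns $b_1,\dots,b_{\dim L(1)}$ of $f_1$ (lying in $\Bbbk^{d_1}$) and $c_1,\dots,c_{\dim L(2)}$ of $f_2$ (lying in $\Bbbk^{d_2}$), and then rewrite the two commutativity constraints $M(\al)f_1=f_2 L(\al)$ and $M(\be)f_1=f_2 L(\be)$ column-by-column as a homogeneous block-linear system. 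Because the structure matrices of $P_n, I_n, R_n(\la), R_n(\infty)$ given in Theorem \ref{thm:Kronecker}(1) are very sparse (identity blocks, zero blocks and single Jordan blocks), this reduction is essentially mechanical.

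The key observation I would exploit is that, in each of the four cases, one of the two commutativity equations is sparse enough that every $c_j$ is directly expressible as $M(\al)b_{\sigma(j)}$ (or $M(\be)b_{\sigma(j)}$ for $R_n(\infty)$) for a suitable index $\sigma(j)$. This eliminates the $c_j$'s entirely, leaving a residual system on the $b_j$'s alone whose coefficient matrix, after a routine check of the resulting block pattern, coincides up to signs with one of the matrices $P_{n-1}(M)$, $I_n(M)$, $R_n(\la,M)$ or $R_n(\infty,M)$ introduced in Definition \ref{dfn:rank-mat}. Since multiplying columns by $-1$ does not change the rank, the formula $\dim\Hom_A(L,M)=N-\rank B$ of Remark \ref{calc-Homdim} applies with $N=(\dim L(1))d_1$ (the $c_j$'s no longer contribute free parameters), and yields each of the four displayed equalities.

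The bookkeeping is the only real obstacle: one must keep careful track of index shifts (for example, that the $n-2$ consistency equations arising from $P_n$ match the $n-2$ block rows of $P_{n-1}(M)$, not of $P_n(M)$), of sign conventions between $+M(\al)$ and $-M(\al)$ in the block entries, and of the two ``boundary'' equations $M(\al)b_n = 0$ and $M(\be)b_1 = 0$ that appear in the $I_n$ case and account for the two extra block rows of $I_n(M)$. The case $n=1$ of $P_n$ must be handled separately, since then $P_1=S_2$ is the simple projective at vertex $2$: both commutativity equations become vacuous, and $\Hom_A(P_1,M)\iso\Hom_\Bbbk(\Bbbk,M(2))$ has dimension $d_2$ directly, matching the stated formula.
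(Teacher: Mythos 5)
Your proposal is correct and follows essentially the same route as the paper: the paper also writes the commutativity constraints column-by-column as a homogeneous system and observes that its coefficient matrix $B$ is equivalent to $P_{n-1}(M)\ds E_{nd_2}$ (for $P_n$, $n\ge 2$), which is exactly your elimination of the $c_j$'s, giving $\dim\Hom_A(P_n,M)=(n-1)d_1-p_{n-1}(M)$, with the other cases treated ``similarly''. The only caveat is bookkeeping you already flag yourself (e.g.\ for $P_n$ the last column $Y_n$ is determined by the $\be$-equation rather than the $\al$-equation, and the sign normalization uses rank-preserving $\pm 1$ scalings of block rows and columns), none of which affects the argument.
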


\begin{proof}\label{eq:Pn-M}
Assume that $n \ge 2$.
Let $(X, Y) \in \Mat{d_1, n-1}\times \Mat{d_2, n}$,
and put $X_i$ (resp.\ $Y_i$) to be $i$-th column of $X$ ($i=1,\dots, n-1$) 
(resp.\ $Y$ ($i=1,\dots, n$)).
Then by \eqref{eq:Hom}
$(X,Y) \in \Hom_A (P_n,M)$ iff 
$$
M(\al)X=Y\bmat{E_{n-1}\\0},\quad M(\be)X=Y\bmat{0 \\E_{n-1}}
$$
iff
$$
\begin{cases}
M(\al)X_1=Y_1, M(\al)X_2=Y_2,\dots, M(\al)X_{n-1}=Y_{n-1}\\
M(\be)X_1=Y_2, M(\be)X_2=Y_3,\dots, M(\be)X_{n-1}=Y_n
\end{cases}
$$
iff
$$
\substack{n-1\text{ blocks}\left\{\rule{0pt}{25pt}\right.\\n-1\text{ blocks}\left\{\rule{0pt}{25pt}\right.}
\left[\rule{0pt}{50pt}
    \begin{array}{c|c}
    \\
  \smash{\overbrace{\Maa}^{n-1\text{ blocks}}}&\smash{\overbrace{\Mab}^{n\text{ blocks}}} \\\\\\[-10pt]
    \hline\\[-10pt]
    \Mba&\Mbb
    \end{array}
\right]
\left[\smat{ X_1\\ X_2\\ \vdots \\X_{n-1}\\Y_1 \\Y_2\\ \vdots \\ Y_n}\right]
=0
$$
Let $B$ be the coefficient matrix of this equation.
Then a direct calculation shows that $B$ is equivalent to $P_{n-1}(M) \ds E_{nd_2}$.
Therefore $\rank B =nd_2+p_{n-1}(M)$, which shows that
$\dim \Hom_A(P_n,M)=(n-1)d_1 + nd_2 -\rank B= (n-1)d_1 - p_{n-1}(M)$,
as desired.
The remaining formulas are proved similarly.
\end{proof}

Propositions \ref{prp:hom-dim} and Theorem \ref{main-formula}
give us a solution to the problem (I) for the Kronecker algebra as follows.

\begin{theorem}\label{thm:pd}
Let $M$ be an $A$-module.  Then we have the following formulas:
$$
\begin{aligned}
\boldsymbol{d}_M(P_n)&=
\begin{cases}
2p_n(M) - p_{n-1}(M) - p_{n+1}(M) &  (n\geq2)  \\
d_2-p_2(M) &  (n=1),
\end{cases}
\\
\boldsymbol{d}_M(I_n)&=
\begin{cases}
2i_{n-1}(M) - i_n(M) - i_{n-2}(M) & (n\geq2)  \\
d_1-i_1(M) & (n=1),
\end{cases}
\\
\boldsymbol{d}_M(R_n(\la))&=
\begin{cases}
r_{n-1}(\la,M) + r_{n+1}(\la,M) - 2r_n(\la,M) & (n\geq2)  \\
r_2(\la,M) - 2r_1(\la,M) & (n=1).
\end{cases}
\end{aligned}
$$
Here we note that $\bd_M(P_1)$ and $\bd_M(I_1)$ have obvious meanings that
$\bd_M(P_1) =\dim \Cok[M(\be)\ M(\al)]$ and
$\bd_M(I_1) = \dim \Ker\!\bmat{M(\be)\\M(\al)}$.
\end{theorem}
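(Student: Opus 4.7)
The plan is to specialize Theorem \ref{main-formula} to each indecomposable $L\in\calL$ from Theorem \ref{thm:Kronecker}(1), read off the almost split sequence starting at $L$ (or the sink map $L\to L/\soc L$ when $L$ is injective) directly from the AR-quiver in Theorem \ref{thm:Kronecker}(2), and then rewrite every $\dim\Hom_A(\blank,M)$ that appears via the rank formulas of Proposition \ref{prp:hom-dim}. In every case the $d_1$-summands of those formulas cancel because the dimension vectors of the end and middle terms of an AR-mesh coincide, so only the quantities $p_n(M)$, $i_n(M)$, $r_n(\la,M)$ survive in the final expression.

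For preprojectives, every $P_n$ ($n\ge 1$) is non-injective and the preprojective component of Theorem \ref{thm:Kronecker}(2) gives the almost split sequence $0\to P_n\to P_{n+1}^{(2)}\to P_{n+2}\to 0$. Plugging into Theorem \ref{main-formula} and Proposition \ref{prp:hom-dim} yields the formula $2p_n(M)-p_{n-1}(M)-p_{n+1}(M)$ for $n\ge 2$, and the boundary $n=1$ falls out with the conventions $p_1(M):=0$ and $\dim\Hom_A(P_1,M)=d_2$. For the regulars the tubes of Theorem \ref{thm:Kronecker}(2), together with $\tau R_n(\la)=R_n(\la)$, give $0\to R_n(\la)\to R_{n-1}(\la)\oplus R_{n+1}(\la)\to R_n(\la)\to 0$ for $n\ge 2$ and $0\to R_1(\la)\to R_2(\la)\to R_1(\la)\to 0$ at the base of the tube, after which a routine substitution produces $r_{n-1}(\la,M)+r_{n+1}(\la,M)-2r_n(\la,M)$ and $r_2(\la,M)-2r_1(\la,M)$ respectively.

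The point requiring the most care is the preinjective side, since the Kronecker algebra has two indecomposable injectives: the simple injective $I_1=S_1$ and the injective envelope $I_2=I(S_2)$, for which $\soc I_2=S_2=P_1$ and hence $I_2/\soc I_2\iso I_1^{(2)}$. For $n\ge 3$ the AR-quiver supplies the usual almost split sequence $0\to I_n\to I_{n-1}^{(2)}\to I_{n-2}\to 0$. For $n=2$ I switch to the sink-map branch of Proposition \ref{prp:proj-res} with $J_{I_2}=\{I_1\}$, $a(I_1)=2$, and $\tau^{-1}I_2=0$; setting $i_0(M):=0$ as in Definition \ref{dfn:rank-mat} then absorbs this case into the uniform closed form $2i_{n-1}(M)-i_n(M)-i_{n-2}(M)$ at $n=2$. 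For $n=1$, $I_1$ is simple injective, so $J_{I_1}=\emptyset$ and Proposition \ref{prp:proj-res} collapses to $\calS_{I_1}\iso\Hom_A(I_1,\blank)$, giving $\bd_M(I_1)=\dim\Hom_A(I_1,M)=d_1-i_1(M)$.

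The concluding reinterpretations $\bd_M(P_1)=\dim\Cok[M(\be)\ M(\al)]$ and $\bd_M(I_1)=\dim\Ker\bmat{M(\be)\\M(\al)}$ then follow at once from $P_1=S_2$, $I_1=S_1$ together with the definitions of $P_2(M)$ and $I_1(M)$ in Definition \ref{dfn:rank-mat}. I do not anticipate a genuine mathematical obstacle: the whole proof is a one-step substitution driven by Theorem \ref{main-formula} and Proposition \ref{prp:hom-dim}. The only real bookkeeping hurdle is correctly handling the two genuinely injective indecomposables $I_1,I_2$ through the sink-map branch of Proposition \ref{prp:proj-res}, together with the multiplicity $a(I_1)=2$ arising from the square in $I_2/\soc I_2\iso I_1^{(2)}$ and the convention $i_0(M)=0$ that is what makes the injective boundary case match the general formula.
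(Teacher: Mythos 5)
Your proof is correct and follows essentially the same route as the paper: plug the known almost split sequences (and, for the injectives $I_1$, $I_2$, the sink maps with $\tau^{-1}L=0$) into Theorem \ref{main-formula} and rewrite all Hom-dimensions via Proposition \ref{prp:hom-dim}. The paper only writes out the cases $P_1$, $I_1$ and $P_n$ ($n\ge 2$) and calls the rest similar; your explicit handling of the injective boundary case $I_2$ with $I_2/\soc I_2\iso I_1^{(2)}$ and the convention $i_0(M)=0$ is precisely the omitted verification.
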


\begin{proof}
Note that by Theorem \ref{thm:Kronecker}(2) we know all the almost split sequences for
the Kronecker algebra. Therefore we can apply Theorem \ref{main-formula}.
We first compute $\boldsymbol{d}_M(P_1)$ and $\boldsymbol{d}_M(I_1)$.
Noting that $\dim \Hom_A(P_2, M) = d_1 -p_1(M) = d_1$ the almost split sequence
starting at $P_1$ that is given by the mesh starting at $P_1$ in the AR-quiver shows that
$$
\begin{aligned}
\boldsymbol{d}_M(P_1) &= \dim\Hom_A(P_1, M) -2\dim\Hom_A(P_2, M) + \dim\Hom_A(P_3, M)\\
&= d_2 -2d_1 +2d_1 -p_2(M) = d_2 - p_2(M)  \\
&=d_2 - \rank[M(\be)\  M(\al)] = \dim \Cok [M(\be)\  M(\al)].
\end{aligned}
$$
Now since $I_1$ is simple and injective, we have $I_1/\soc I_1 = 0$ and $\ta\inv I_1 = 0.$
Hence
$$
\begin{aligned}
\boldsymbol{d}_M(I_1) &= \dim\Hom_A(I_1, M) = d_1 -i_1(M) \\
&= d_1 - \rank\!\bmat{M(\be)\\M(\al)} = \dim \Ker\!\bmat{M(\be)\\M(\al)}.
\end{aligned}
$$
Next we compute $\boldsymbol{d}_M(P_n)$ for $n \ge 2$.
$$
\begin{aligned}
\boldsymbol{d}_M(P_n) &= \dim\Hom_A(P_n, M)  -2\dim\Hom_A(P_{n+1}, M) + \dim\Hom_A(P_{n+2}, M)\\
&= (n-1)d_1 -p_{n-1}(M) - 2(nd_1 - p_n(M)) + (n+1)d_1 -p_{n+1}(M)\\
&=2p_n(M)-p_{n-1}(M)
-p_{n+1}(M),\end{aligned}
$$
as desired.
The remaining cases are proved similarly.
\end{proof}

\section{Solution to the problem (II) for the Kronecker algebra}

Let $F\colon \Ds_{L \in \calL}L^{(\boldsymbol{d}_M(L))}\to M$
be an isomorphism.
Then we have
$$
M=P_M \oplus R_M \oplus I_M,
$$
where $P_M,R_M$ and $I_M$ are
the images of $\Ds_{L \in \calP}L^{(\boldsymbol{d}_M(L))},\Ds_{L \in \calR}L^{(\boldsymbol{d}_M(L))}$ and
$\Ds_{L \in \calI}L^{(\boldsymbol{d}_M(L))}$ by $F$, respectively.
To compute $P_M, R_M$ and $I_M$
we here use the trace and reject in a module of a class of modules
(see Anderson--Fuller \cite{AF} for details).
Let $\calU$ be a class of modules in $\mod A$ and $M \in \mod A$.
Recall that the {\em trace} $\tr_M(\calU)$ of $\calU$ in $M$ and the {\em reject}
$\rej_M(\calU)$ of $\calU$ in $M$
are defined by
$$
\begin{aligned}
\tr_M(\calU)&:= \sum\{\im f\mid f \in \Hom_A(U, M) \text{ for some }U \in \calU\}, \text{and}\\
\rej_M(\calU)&:= \bigcap\{\Ker f \mid f \in \Hom_A(M, U) \text{ for some }U \in \calU\}.
\end{aligned}
$$
When $\calU = \{U\}$ is a singleton, we set $\tr_M(U):= \tr_M(\calU)$ and
$\rej_M(U):= \rej_M(\calU)$.
We cite the following from \cite[8.18 Proposition]{AF}.

\begin{lemma}\label{lem:tr-rej}
Let $(M_i)_{i\in I}$ be a family of $A$-modules indexed by a set $I$ and $\calU$
a class of modules in $\mod A$.
Then we have
$$
\tr_{\Ds_{i \in I}M_i}(\calU) = \Ds_{i\in I}\tr_{M_i}(\calU)\quad \text{and}\quad
\rej_{\Ds_{i \in I}M_i}(\calU) = \Ds_{i\in I}\rej_{M_i}(\calU).
$$
\end{lemma}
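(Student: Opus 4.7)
The result is cited from Anderson--Fuller, so it is standard; the plan is simply to unpack the definitions of trace and reject and use the universal property of the direct sum together with the fact that every element of $\Ds_{i\in I}M_i$ has finite support. Since both sides of each asserted equality commute with sums/intersections over $U\in\calU$, it suffices to treat the case where $\calU=\{U\}$ is a singleton and then pass to the general class by taking sums (for the trace) and intersections (for the reject).

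For the trace identity, the inclusion $\Ds_{i\in I}\tr_{M_i}(\calU)\subseteq \tr_{\Ds_{i\in I}M_i}(\calU)$ follows by composing any $f\colon U\to M_j$ with the canonical injection $\iota_j\colon M_j\to\Ds_{i\in I}M_i$: the composite $\iota_j f\colon U\to\Ds_{i\in I}M_i$ has image $\iota_j(\im f)$, and summing over $j$ and over such $f$ covers the left-hand side. Conversely, for any $f\colon U\to\Ds_{i\in I}M_i$ and any $x\in U$, only finitely many components $\pi_i f(x)$ are nonzero, where $\pi_i$ denotes the canonical projection; hence $f(x)=\sum_i \iota_i\pi_i f(x)$, which exhibits $\im f\subseteq \Ds_{i\in I}\im(\pi_i f)\subseteq \Ds_{i\in I}\tr_{M_i}(\calU)$.

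For the reject identity, the plan is dual. Given $x=(x_i)_{i\in I}\in\Ds_{i\in I}\rej_{M_i}(\calU)$ and any $g\colon\Ds_{i\in I}M_i\to U$ with $U\in\calU$, we have $g(x)=\sum_i g\iota_i(x_i)$ (a finite sum since $x$ has finite support), and each $g\iota_i\in\Hom_A(M_i,U)$ annihilates $x_i$ by definition of $\rej_{M_i}(\calU)$; hence $g(x)=0$, giving one inclusion. Conversely, if $x=(x_i)_{i\in I}\in\rej_{\Ds_{i\in I}M_i}(\calU)$, then for each $j\in I$ and each $h\colon M_j\to U$ the composite $h\pi_j\colon\Ds_{i\in I}M_i\to U$ satisfies $h(x_j)=h\pi_j(x)=0$, whence $x_j\in\rej_{M_j}(\calU)$ for every $j$, so $x\in\Ds_{i\in I}\rej_{M_i}(\calU)$.

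There is no real obstacle here: the content is purely formal manipulation with the canonical maps $\iota_i,\pi_i$. The only point to be mindful of is that $I$ may be infinite, so one must use that elements of $\Ds_{i\in I}M_i$ have finite support to make sense of the expressions $\sum_i \iota_i\pi_i f(x)$ and $\sum_i g\iota_i(x_i)$; in the application of this lemma in the present paper the index set is finite, so even this mild point is automatic.
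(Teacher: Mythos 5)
Your proof is correct and complete: both inclusions for the trace and for the reject are verified correctly via the canonical maps $\iota_i,\pi_i$, and you rightly note that the finite-support property of elements of $\Ds_{i\in I}M_i$ is what makes the sums $\sum_i \iota_i\pi_i f(x)$ and $\sum_i g\iota_i(x_i)$ legitimate when $I$ is infinite. The paper itself gives no proof at all -- it cites the statement as \cite[8.18 Proposition]{AF} -- so there is nothing to compare against; your element-wise verification is exactly the standard argument behind that reference.
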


\begin{proposition}[Calculation of $R_M \oplus I_M$]\label{prp:RM-IM}
If $\{f_1,\dots,f_a\}$ is a basis of\/ $\Hom_A(M,P_{d_2})$, then we have
$$\bigcap^a_{i=1}\Ker f_i = R_M \oplus I_M \quad\text{and hence}\quad P_M \iso M/\left(\bigcap^a_{i=1}\Ker f_i\right).$$
\end{proposition}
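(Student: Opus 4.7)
The plan is to recognize $\bigcap_{i=1}^{a}\Ker f_i$ as the reject $\rej_M(P_{d_2})$, and then to use the splitting $M = P_M \oplus R_M \oplus I_M$ together with Lemma \ref{lem:tr-rej} to evaluate this reject on each summand. Since $\{f_1,\dots,f_a\}$ is a basis of $\Hom_A(M,P_{d_2})$, every $f\in\Hom_A(M,P_{d_2})$ is a linear combination of the $f_i$, so $\Ker f \supseteq \bigcap_i \Ker f_i$; conversely each $f_i$ is itself in $\Hom_A(M,P_{d_2})$. Hence
$$\bigcap_{i=1}^{a}\Ker f_i \;=\; \rej_M(P_{d_2}),$$
and by Lemma \ref{lem:tr-rej},
$$\rej_M(P_{d_2}) \;=\; \rej_{P_M}(P_{d_2})\oplus \rej_{R_M}(P_{d_2})\oplus \rej_{I_M}(P_{d_2}).$$

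For the regular and preinjective parts, I would invoke Theorem \ref{thm:Kronecker}(3): every nonzero morphism from a regular or preinjective indecomposable lands only in a regular or preinjective indecomposable, so $\Hom_A(R_M,P_{d_2})=0=\Hom_A(I_M,P_{d_2})$. Under the convention that the intersection over the empty set is the full module, this gives $\rej_{R_M}(P_{d_2})=R_M$ and $\rej_{I_M}(P_{d_2})=I_M$.

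The key step, which I expect to be the only one requiring any thought, is showing $\rej_{P_M}(P_{d_2})=0$. Decompose $P_M = \bigoplus_{j} P_{n_j}^{(c_j)}$ and apply Lemma \ref{lem:tr-rej} once more, so that it suffices to check $\rej_{P_n}(P_{d_2})=0$ for every $P_n$ appearing in $P_M$; this amounts to producing a monomorphism $P_n \hookrightarrow P_{d_2}$. Since $P_n$ is a direct summand of $M$, Remark \ref{rmk:order}(2) gives $\udim P_n = (n-1,n) \le (d_1,d_2) = \udim M$, in particular $n\le d_2$, and then Remark \ref{rmk:order}(1) supplies the desired monomorphism $P_n \to P_{d_2}$.

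Combining the three contributions yields $\rej_M(P_{d_2}) = R_M \oplus I_M$, hence $\bigcap_{i=1}^{a}\Ker f_i = R_M\oplus I_M$, and the isomorphism $P_M \cong M/(R_M\oplus I_M)$ then follows from the decomposition $M = P_M\oplus R_M\oplus I_M$.
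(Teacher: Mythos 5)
Your proposal is correct and follows essentially the same route as the paper's proof: identify $\bigcap_i \Ker f_i$ with $\rej_M(P_{d_2})$, split the reject over $P_M \oplus R_M \oplus I_M$ via Lemma \ref{lem:tr-rej}, kill the $R_M$ and $I_M$ contributions with Theorem \ref{thm:Kronecker}(3), and annihilate $\rej_{P_M}(P_{d_2})$ using Remark \ref{rmk:order} to bound the indices of preprojective summands by $d_2$ and embed each $P_n$ into $P_{d_2}$. No gaps; the extra justification that a basis suffices to compute the reject is a fine (if routine) addition.
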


\begin{proof}
By assumption it is obvious that $\bigcap^a_{i=1}\Ker f_i= \rej_M(P_{d_2})$.
Therefore, it is enough to show that
\begin{equation}\label{eq:RM-IM}
\rej_M(P_{d_2}) = R_M \oplus I_M.
\end{equation}
By Lemma \ref{lem:tr-rej} we have
$$
\rej_M(P_{d_2}) = \rej_{P_M \ds R_M \ds I_M}(P_{d_2}) = \rej_{P_M}(P_{d_2}) \ds
\rej_{R_M}(P_{d_2}) \ds \rej_{I_M}(P_{d_2}).
$$
By Theorem \ref{thm:Kronecker}(3) we have $\Hom_A(R_M, P_{d_2}) = 0$ and
$\Hom_A(I_M, P_{d_2}) = 0$, which shows that
$$
\rej_{R_M}(P_{d_2}) = R_M \quad\text{and}\quad  \rej_{I_M}(P_{d_2}) = I_M.
$$
If a preprojective indecomposable module $P_i$ is a direct summand of $M$,
then it follows from $(i-1,i) = \udim P_i \le \udim M = (d_1,d_2)$ that $i \le d_2$
(see Remark \ref{rmk:order}(2)).
Therefore, we have $P_M = \Ds_{i=1}^{d_2}P_i^{(a_i)}$ for some $a_i \ge 0$
(we identify $P_i$ with $F(P_i)$), and then
$\rej_{P_M}(P_{d_2})= \Ds_{i=1}^{d_2}(\rej_{P_i}(P_{d_2}))^{(a_i)}$.
Now if $i \le d_2$, then by Remark \ref{rmk:order}(1)
we have a monomorphism $P_i \to P_{d_2}$, which shows that
$\rej_{P_i}(P_{d_2}) = 0$ for all $i \le d_2$, and therefore
$$\rej_{P_M}(P_{d_2})= 0.$$
Hence the equality \eqref{eq:RM-IM} holds.
\end{proof}

\begin{proposition}[Calculation of $I_M$]\label{prp:IM}
If $\{g_1,\dots,g_b\}$ is a basis of\/ $\Hom_A(I_{d_1},R_M \oplus I_M)$,
then we have
$$
\sum^b_{i=1}\im g_i = I_M.
$$
\end{proposition}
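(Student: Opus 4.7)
The plan is to mirror the approach of the preceding Proposition \ref{prp:RM-IM} but with traces in place of rejects. First I would recognize that by definition
\[
\sum_{i=1}^{b}\im g_i = \tr_{R_M \oplus I_M}(I_{d_1}),
\]
since the $g_i$'s form a basis of $\Hom_A(I_{d_1}, R_M \oplus I_M)$ and every morphism from $I_{d_1}$ into the target is a linear combination of them. Thus the goal reduces to the identity $\tr_{R_M \oplus I_M}(I_{d_1}) = I_M$.

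Next, applying Lemma \ref{lem:tr-rej} to the direct sum decomposition yields
\[
\tr_{R_M \oplus I_M}(I_{d_1}) = \tr_{R_M}(I_{d_1}) \oplus \tr_{I_M}(I_{d_1}),
\]
so it suffices to verify separately that $\tr_{R_M}(I_{d_1}) = 0$ and $\tr_{I_M}(I_{d_1}) = I_M$. The vanishing on the regular part is immediate from Theorem \ref{thm:Kronecker}(3): none of the cases (i)--(v) allows a nonzero morphism from an object of $\calI$ to an object of $\calR$, so $\Hom_A(I_{d_1}, R_M) = 0$ and hence $\tr_{R_M}(I_{d_1}) = 0$.

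For the preinjective part, I would first bound the indecomposable summands of $I_M$: if $I_j$ is a direct summand of $M$, then from $\udim I_j = (j, j-1) \le (d_1, d_2)$ (via Remark \ref{rmk:order}(2) applied to the inclusion $I_j \hookrightarrow M$) we get $j \le d_1$. So $I_M \iso \Ds_{j=1}^{d_1} I_j^{(b_j)}$ for some $b_j \ge 0$. By Lemma \ref{lem:tr-rej}, it remains to show $\tr_{I_j}(I_{d_1}) = I_j$ for every $j \le d_1$, and this follows from Remark \ref{rmk:order}(1): there is an epimorphism $I_{d_1} \to I_j$, whose image is all of $I_j$, so the trace is forced to equal $I_j$. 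Combining these two computations gives $\tr_{R_M \oplus I_M}(I_{d_1}) = I_M$, which is the desired conclusion.

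The argument is essentially a direct dual of the proof of Proposition \ref{prp:RM-IM}, so I do not expect any serious obstacle; the only mild subtlety is making sure that the dimension vector bound is applied in the correct direction (here via the inclusion $I_j \hookrightarrow M$ rather than a quotient) and that the Hom-vanishing from Theorem \ref{thm:Kronecker}(3) is read off for the correct pair of classes.
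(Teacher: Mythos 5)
Your proposal is correct and follows essentially the same route as the paper's proof: identify the sum of images with $\tr_{R_M\oplus I_M}(I_{d_1})$, split the trace over the direct sum via Lemma \ref{lem:tr-rej}, kill the regular part with the Hom-vanishing of Theorem \ref{thm:Kronecker}(3), bound the preinjective summands by $\udim I_j \le \udim M$ so that $j \le d_1$, and use the epimorphisms $I_{d_1}\to I_j$ from Remark \ref{rmk:order}(1) to get $\tr_{I_M}(I_{d_1}) = I_M$. No gaps.
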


\begin{proof}
By assumption it is obvious that $\sum^b_{i=1}\im g_i  = \tr_{R_M \ds I_M}(I_{d_1})$.
Therefore it is enough to show that
\begin{equation}\label{eq:IM}
\tr_{R_M\ds I_M}(I_{d_1}) = I_M.
\end{equation}
By Lemma \ref{lem:tr-rej} we have
$$
\tr_{R_M\ds I_M}(I_{d_1}) = \tr_{R_M}(I_{d_1}) \ds \tr_{I_M}(I_{d_1}).
$$
By Theorem \ref{thm:Kronecker}(3) we have $\Hom_A(I_{d_1}, R_M) = 0$, which shows that
$$
\tr_{R_M}(I_{d_1}) = 0.
$$
If a preinjective indecomposable module $I_i$ is a direct summand of $M$, then
 it follows from $(i,i-1) = \udim I_i \le \udim M = (d_1,d_2)$ that $i \le d_1$.
Therefore we have $I_M = \Ds_{i=1}^{d_1}I_i^{(b_i)}$ for some $b_i \ge 0$
(we identify $I_i$ with $F(I_i)$), and then
$\tr_{I_M}(I_{d_1})= \Ds_{i=1}^{d_1}(\tr_{I_i}(I_{d_1}))^{(b_i)}$.
Now if $i \le d_1$, then we have an epimorphism $I_{d_1} \to I_i$, which shows that
$\tr_{I_i}(I_{d_1}) = I_i$ for all $i \le d_1$, and therefore
$$\tr_{I_M}(I_{d_1})= I_M.$$
Hence the equality \eqref{eq:IM} holds.
\end{proof}

By Propositions \ref{prp:RM-IM} and \ref{prp:IM} we have the following.

\begin{proposition}[Calculation of $R_M$]\label{prp:RM}
Let $\{f_1,\dots,f_a\}$ a basis of\/ $\Hom_A(M,P_{d_2})$ and
$\{g_1,\dots,g_b\}$ a basis of\/ $\Hom_A(I_{d_1},\bigcap^a_{i=1}\Ker f_i)$.
Then we have
$$
R_M \iso \left(\bigcap^a_{i=1}\Ker f_i\right)/\left(\sum^b_{i=1}\im g_i\right).
$$
\end{proposition}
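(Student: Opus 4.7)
The plan is to deduce this directly by chaining the two previous propositions. First I would apply Proposition \ref{prp:RM-IM} to rewrite the intersection $\bigcap_{i=1}^a \Ker f_i$ as the internal direct sum $R_M \oplus I_M$ inside $M$. Since $\{f_1,\dots,f_a\}$ is a basis of $\Hom_A(M,P_{d_2})$, the intersection of their kernels is exactly $\rej_M(P_{d_2})$, and Proposition \ref{prp:RM-IM} identifies this with $R_M \oplus I_M$.

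Next I would observe that the submodule $\bigcap_{i=1}^a \Ker f_i$ appearing in the hypothesis is therefore the very module $R_M \oplus I_M$ that features in Proposition \ref{prp:IM}. Consequently the chosen basis $\{g_1,\dots,g_b\}$ of $\Hom_A(I_{d_1},\bigcap_{i=1}^a \Ker f_i)$ is a basis of $\Hom_A(I_{d_1},R_M\oplus I_M)$, so that $\sum_{i=1}^b \im g_i = \tr_{R_M\oplus I_M}(I_{d_1})$, and Proposition \ref{prp:IM} identifies this trace with $I_M$.

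Finally I would conclude by passing to the quotient:
$$
\left(\bigcap_{i=1}^a \Ker f_i\right)\Big/\left(\sum_{i=1}^b \im g_i\right) \;=\; (R_M\oplus I_M)/I_M \;\iso\; R_M,
$$
which is the claimed isomorphism. There is no real obstacle here beyond verifying that the bases described in the two hypotheses are compatible, i.e., that the ambient module in Proposition \ref{prp:IM} can be taken to be $\bigcap_{i=1}^a \Ker f_i$ thanks to Proposition \ref{prp:RM-IM}; once this identification is made, the result is an immediate corollary of the two preceding propositions.
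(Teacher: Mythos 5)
Your argument is correct and is precisely the paper's route: the paper derives this proposition by combining Propositions \ref{prp:RM-IM} and \ref{prp:IM} exactly as you do, identifying $\bigcap_{i=1}^a \Ker f_i$ with $R_M \oplus I_M$ and $\sum_{i=1}^b \im g_i$ with $I_M$, then taking the quotient. No gap; your version just spells out the compatibility of the two hypotheses, which the paper leaves implicit.
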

By this isomorphism we identify $R_M$ with the right hand side.
Since $R_M = (R_M (\al),R_M(\be))$ is the direct sum of regular indecomposable modules, both $R_M(\al)$ and $R_M(\be)$ are square matrices, say of size $d$.
Put $R(\infty):=\tr_{R_M}(R_d(\infty))$. 
Note that $\tr_{R_M}(R_d(\infty)) = \tr_{R_M}(\Ds_{n=1}^d R_n(\infty))$ because there exists an epimorphism $R_n (\infty) \to R_m (\infty)$ for $n\geq m$. 
Then $R_M = R(\infty) \ds R'$ for some $A$-submodule $R'=(X',Y')$ of $R_M$ such that $R'$ has no direct summand of the form $R_n (\infty)$ for any $n$
by Theorem \ref{thm:Kronecker}(3)(iii).
[Decompose $R_M$ into indecomposables of the form $R_n(\la)$ with $n \ge 1, \la \in \bbP^1(\Bbbk)$.
Then $R'$ is given by the direct sum of those direct summands of the form $R_n(\la)$ with $\la \ne \infty$ because $R(\infty)$ is given by the direct sum of summands
of the form $R_n(\infty)$. Note that $R'$ is also computed as $R' = \rej_{R_M}(R_d(\infty))$.]
Since the matrix $X'$ is invertible, 
we have $$R' \iso (E_l,(X')^{-1}Y')$$
for some $l \leq d$. 
Therefore, the set  $\La$  of eigenvalues of $(X')^{-1}Y'$ is finite.

Then by Propositions \ref{prp:RM-IM}, \ref{prp:IM} and \ref{prp:RM}, we obtain the following.

\begin{theorem}
Set $$
S_M:=\{P_i,I_j,R_k(\la)\mid 1\le i \le d_2, 1\le j \le d_1,1\le k \le d, \la\in\La\cup\{\infty\}\}.$$
Then this gives a solution to the problem {\rm (II)} for the Kronecker algebra.
\end{theorem}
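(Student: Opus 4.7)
The plan is to verify that every indecomposable summand appearing in the Krull--Schmidt decomposition of $M$ lies in the finite set $S_M$. Propositions \ref{prp:RM-IM}, \ref{prp:IM}, \ref{prp:RM}, combined with the subsequent splitting $R_M = R(\infty) \oplus R'$, already supply the decomposition
$$M \cong P_M \oplus R(\infty) \oplus R' \oplus I_M,$$
so it is enough to check that each of these four pieces has all of its indecomposable components in $S_M$.

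For the preprojective and preinjective parts, write $P_M \cong \bigoplus_i P_i^{(a_i)}$ and $I_M \cong \bigoplus_j I_j^{(b_j)}$. Because $\udim P_i = (i-1,i)$ and $\udim I_j = (j, j-1)$, and because each $P_i$ or $I_j$ that actually occurs is a direct summand of $M$, Remark \ref{rmk:order}(2) together with $\udim M = (d_1, d_2)$ forces $i \le d_2$ and $j \le d_1$. This merely reassembles the dimension-vector arguments already used inside the proofs of Propositions \ref{prp:RM-IM} and \ref{prp:IM}, and places all such summands in the ranges prescribed by $S_M$.

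For the regular part $R_M$, note that $\udim R_M = (d,d)$ by the very definition of $d$, and any indecomposable summand $R_k(\la)$ satisfies $\udim R_k(\la) = (k,k)$, so Remark \ref{rmk:order}(2) gives $k \le d$. Summands of $R(\infty)$ contribute the parameter $\infty \in \La \cup \{\infty\}$. For $R' \cong (E_l, (X')\inv Y')$ with $l \le d$, I would bring $(X')\inv Y'$ to Jordan canonical form: choose $P \in GL_l(\Bbbk)$ with $P\inv (X')\inv Y' P = \bigoplus_s J_{k_s}(\la_s)$, where $\{\la_s\}_s \subseteq \La$ are precisely the distinct eigenvalues of $(X')\inv Y'$. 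The simultaneous change of basis $(E_l, (X')\inv Y') \cong (P\inv E_l P,\, P\inv (X')\inv Y' P)$ then yields
$$R' \cong \bigoplus_s (E_{k_s}, J_{k_s}(\la_s)) = \bigoplus_s R_{k_s}(\la_s),$$
so every parameter appearing in an indecomposable decomposition of $R'$ belongs to $\La$, and every block size is at most $l \le d$. Combining with the three previous cases, all indecomposable summands of $M$ lie in $S_M$.

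The argument is essentially bookkeeping; the only nontrivial step is confirming that $\La$, defined via the auxiliary presentation $(E_l, (X')\inv Y')$, really captures all finite parameters $\la$ that can appear in $R_M$, and that is handled by the Jordan form step above. Once the decomposition furnished by Propositions \ref{prp:RM-IM}, \ref{prp:IM}, \ref{prp:RM} is in hand, no genuine obstacle remains.
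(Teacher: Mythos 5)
Your argument is correct and follows essentially the same route as the paper: the paper's one-line proof rests on exactly the ingredients you spell out, namely the decomposition $M = P_M \oplus R_M \oplus I_M$ with the dimension-vector bounds $i \le d_2$, $j \le d_1$, $k \le d$ from Remark \ref{rmk:order}(2), the splitting $R_M = R(\infty)\oplus R'$, and the identification of the finite parameters of $R'$ with the eigenvalues of $(X')\inv Y'$ via the Jordan form (which the paper leaves implicit in its definition of $\La$). You merely make this bookkeeping explicit, which is fine.
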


\begin{remark}
Note that if $R(\infty) = 0$, then we can replace $S_M$ by
$$
\{P_i,I_j,R_k(\la)\mid 1\le i \le d_2, 1\le j \le d_1,1\le k \le d, \la\in\La\}.$$

\end{remark}

\section{Examples for the Kronecker algebra}
$(1)$
For a preprojective module $M=P_3=\left( \begin{bmatrix}1&0\\ 0&1 \\0&0\end{bmatrix}, \begin{bmatrix}0&0\\1&0\\ 0&1 \end{bmatrix}\right)$ with $\udim M = (2,3)$, 
we will compute $p_n (M)\ (n\in \bbN)$ and then we will give $\boldsymbol{d}_M (P_n) \ (n \in \bbN)$.
By Definition \ref{dfn:rank-mat} we have
$p_1 (M)=0$,
$$
\begin{gathered}
p_2 (M) 
=\rank 
\left[ 
    \begin{array}{c|c}
    M(\be) &M(\al) 
    \end{array}
\right]
=\rank 
\left[ 
\vphantom{\begin{array}{c}1\\1\\1\\\end{array}}
    \begin{array}{cc|cc}
          0&0&1&0 \\
          1&0&0&1 \\
          0&1&0&0
    \end{array}
\right]=3\text{,}
\\
p_3 (M)
=\rank
\left[ 
\vphantom{\begin{array}{c}1\\1\\\end{array}}
    \begin{array}{c|c|c}
    M(\be) &M(\al) &0 \\
    \hline   
    0&M(\be)&M(\al)
    \end{array}
\right]
=\rank
\left[ 
\vphantom{\begin{array}{c}1\\1\\\end{array}}
    \begin{array}{c|c|c}
    \Mb&\Ma & \\
    \hline
    &\Mb&\Ma
    \end{array}
\right]=6\text{,}
\\
p_4 (M)
=\rank
\left[ 
\vphantom{\begin{array}{c}1\\1\\1\\\end{array}}
    \begin{array}{c|c|c|c}
    M(\be) &M(\al) &0 &0 \\
    \hline   
    0&M(\be)&M(\al) &0 \\
    \hline
    0&0&M (\be)&M(\al) 
    \end{array}
\right]
=\rank
\left[ 
\vphantom{\begin{array}{c}1\\1\\\end{array}}
    \begin{array}{c|c|c|c}
    \Mb&\Ma & \\
    \hline
    &\Mb&\Ma \\
    \hline
    &&\Mb&\Ma
    \end{array}
\right]=8,
\end{gathered}
$$
and
$$
p_5 (M)
=\rank
\left[ 
\vphantom{\begin{array}{c}1\\1\\1\\\end{array}}
    \begin{array}{c|c|c|c|c}
    M(\be) &M(\al) &0 &0 &0\\
    \hline   
    0&M(\be)&M(\al) &0 &0\\
    \hline
    0&0&M (\be)&M(\al) &0 \\
    \hline
    0&0&0& M (\be)&M(\al) 
    \end{array}
\right]
$$
$$
=\rank
\left[ 
\vphantom{\begin{array}{c}1\\1\\\end{array}}
    \begin{array}{c|c|c|c|c}
    \Mb&\Ma & & \\
    \hline
    &\Mb&\Ma &\\
    \hline
    &&\Mb&\Ma&\\
    \hline
    &&&\Mb&\Ma
    \end{array}
\right]=10\text{.}
$$
Similarly, we have $p_n (M) =2 n$ for $n \geq 3$.
Hence by Theorem \ref{thm:pd} we have
$$
\begin{array}{l}
\boldsymbol{d}_M (P_1) = 3- p_2 (M) = 0 ,\\
\boldsymbol{d}_M (P_2) = 2 p_2 (M) -p_1(M) - p_3 (M) = 6 -0 - 6=0, \\
\boldsymbol{d}_M (P_3) = 2 p_3 (M) -p_2(M) - p_4 (M) = 12 -3 - 8=1,\\
\end{array}
$$
and for $n \geq 4$, 
$$
\boldsymbol{d}_M (P_n) = 2 p_n (M) -p_{n-1}(M) - p_{n+1} (M) = 2 \cdot 2n -2(n-1) - 2(n+1) =0. 
$$
Thus we can confirm $\boldsymbol{d}_M (P_3) = 1$ and $\boldsymbol{d}_M (P_n)=0$ for $n\not=3$. 

$(2)$
For a module $M=\left( \begin{bmatrix}0&0 \\ 1 & 0\end{bmatrix},0_{2,2} \right)= P_1 \ds R_1 (0) \ds I_1$ with $\udim M=(2,2)$, 
we will compute $\rej_M(P_{2})$ and $\tr_{\rej_M(P_{2})}(I_{2})$. 
Recall that $P_2 = \left( \begin{bmatrix}1 \\0\end{bmatrix}, \begin{bmatrix}0\\ 1\end{bmatrix} \right)$. 
If $(X,Y) \in \Hom_A (M,P_2)$, then we have
$X =0_{1,2}, Y=\bmat{a&0 \\ b & 0}$ for some $a, b \in \Bbbk$,
and we can take
$\left\{ f_1=  \left(0_{1,2}, \begin{bmatrix}1&0\\ 0&0\end{bmatrix} \right) , f_2 =\left(0_{1,2}, \begin{bmatrix}0&0\\ 1&0\end{bmatrix} \right) \right\}$ as a basis of $\Hom_A (M,P_2)$. 
Hence we have
$$
\rej_M(P_{2})=\Ker f_1 \cap \Ker f_2 = \left( \begin{bmatrix}1 & 0\end{bmatrix},0_{1,2} \right)= R_1 (0) \ds I_1
$$
with $\udim \rej_M(P_{2}) =(2,1)$ 
and have $M/\rej_M(P_{2}) \iso P_1$. 
Moreover, recall that $I_2 = \left( \begin{bmatrix}1 & 0\end{bmatrix},\begin{bmatrix}0 & 1\end{bmatrix} \right)$. 
If $(X,Y)\in \Hom_A (I_2, \rej_M(P_{2}))$, then we have  $X =\begin{bmatrix}0&0 \\ c & d\end{bmatrix}, Y=0_{1,1}$ for some $c, d \in \Bbbk$,
and we can also take $\left\{ g_1=   \left(\begin{bmatrix}0&0\\ 1&0\end{bmatrix},0_{1,1} \right), g_2=\left(\begin{bmatrix}0&0\\ 0&1\end{bmatrix}, 0_{1,1} \right) \right\}$ as a basis of $\Hom_A (I_2, \rej_M(P_{2}))$. 
Therefore, we have
$$
\tr_{\rej_M(P_{2})}(I_{2}) = \im g_1 +\im g_2 = 
\left(\J_{0,1},\J_{0,1} \right) = I_1
$$
with $\udim \tr_{\rej_M(P_{2})}(I_{2}) = (1,0)$,  
and $\rej_M(P_{2})/ \tr_{\rej_M(P_{2})}(I_{2}) \iso R_1(0) $.
Thus we can confirm the process to get $S_M=\{P_1,P_2,I_1,I_2,R_1(0)\}$ in Section $5$.

\section*{Acknowledgments}
The starting idea was made when the first named author was at 
the lecture on the Kronecker canonical form of singular mixed matrix pencils by S.~Iwata at CREST meeting in February, 2016. 
In fact, later we recognized that
Iwata--Shimizu \cite{IS} had given relationships between
values of ranks $p_n(M)$ (resp.\ $i_n(M), r_n(0, M)$, and $r_n(\infty, M)$)
in Definition \ref{dfn:rank-mat} and the values of $\boldsymbol{d}_M(L)$ for $L = P_n$
(resp.\ $I_n, R_n(0)$, and $R_n(\infty)$) in Theorem \ref{thm:Kronecker}.
However, they did not mention regular indecomposables
$R_n(\la)$ with $\la \ne 0, \infty$.
The authors would like to thank S.~Iwata for his lecture and M.~Takamatsu for informing us of
the paper \cite{IS}.
After submitting the paper E.~Escolar and the referee pointed out that the paper
\cite{DM} is already published dealing with the same problem with similar results,
for which we are very thankful.
This work is partially supported by Grant-in-Aid for Scientific Research 25610003 and 25287001 from JSPS (Japan Society for the Promotion of Science), and by JST (Japan Science and Technology Agency) CREST Mathematics (15656429).

\bibliographystyle{plain}

\end{document}